\newtheorem{theorem}{Theorem}
\newtheorem{lemma}{Lemma}
\newtheorem{assumption}{Assumption}
\newtheorem{definition}{Definition}
\newtheorem{prop}{Proposition}
\newtheorem{remark}{Remark}
\newtheorem{corollary}{Corollary}
\DeclareMathAlphabet{\mathpzc}{OT1}{pzc}{m}{it}
\newcommand{\dps}{\displaystyle } 
\newcommand{\rme}{\mathrm{e}}
\newcommand{\ri}{\mathrm{i}} 
\newcommand{\cL}{\mathcal{L}}
\newcommand{\cLs}{\mathcal{S}}
\newcommand{\cLa}{\mathcal{A}}
\newcommand{\Schur}{\mathfrak{S}_0}
\newcommand{\Schurb}{\mathfrak{S}_1}
\newcommand{\cLham}{\mathcal{L}_{\rm ham}}
\newcommand{\cLFD}{\mathcal{L}_{\rm FD}}
\newcommand{\cB}{\mathcal{B}}
\newcommand{\cX}{\mathcal{X}}
\newcommand{\cD}{\mathcal{D}}
\newcommand{\eps}{\varepsilon}
\newcommand{\R}{\mathbb{R}}
\newcommand{\Id}{\mathrm{Id}} 
\newcommand{\cR}{\mathcal{R}}
\newcommand{\cH}{\mathcal{H}}
\newcommand{\subplus}{\textnormal{\texttt{+}}}
\renewcommand{\leq}{\leqslant}
\renewcommand{\geq}{\geqslant}
\renewcommand{\le}{\leqslant}
\renewcommand{\ge}{\geqslant}
\begin{document}

  
\title{Hypocoercivity with Schur complements}
\author{E. Bernard,$^1$ M. Fathi,$^2$ A. Levitt$^{3,1}$ and G. Stoltz$^{1,3}$ \\
  \small 1: CERMICS, Ecole des Ponts, Marne-la-Vallée, France \\
  \small 2: LPSM and LJLL, Universit\'e de Paris, France \\
  \small 3: MATHERIALS project-team, Inria Paris, France
}
 
\maketitle

\begin{abstract}
  We propose an approach to obtaining explicit estimates on the resolvent of
  hypocoercive operators by using Schur complements, rather
  than from an exponential decay of the evolution
  semigroup combined with a time integral. We present applications to Langevin-like dynamics and
  Fokker--Planck equations, as well as the linear Boltzmann equation
  (which is also the generator of randomized Hybrid Monte Carlo in
  molecular dynamics). In particular, we make precise the dependence
  of the resolvent bounds on the parameters of the dynamics and on the
  dimension. We also highlight the relationship of our method with
  other hypocoercive approaches.
\end{abstract}

\section{Introduction}

Degenerate dynamics appear in many contexts. Two prominent examples
are Langevin-type dynamics in molecular dynamics (whose associated generators are Fokker--Planck operators), and Boltzmann-type
equations in the kinetic theory of fluids. From an analytical viewpoint,
these operators have a degenerate dissipative structure. For instance, Fokker--Planck operators
are partial differential operators with
degenerate second order derivatives, in contrast to generators associated with non-degenerate diffusive dynamics which have full second order  derivatives. The key point in proving the longtime convergence of degenerate dynamics is to retrieve some dissipation in all degrees of freedom by a combination of the transport part of the evolution and the degenerate diffusion -- as provided by the hypocoercive techniques reviewed below.

We first briefly describe the two models which motivate our study
(Langevin-type dynamics and linear Boltzmann equation), and then
provide a review of existing hypocoercive approaches. We then turn to
our motivation, namely providing bounds on the resolvent of the
partial differential operator under consideration directly, without
going through kinetic estimates on the evolution semigroups; and
describe our approach based on Schur complements.
  
\paragraph{Paradigmatic hypocoercive dynamics.}
Our first motivation for studying hypocoercive operators stems from
molecular dynamics~\cite{FrenkelSmit,Tuckerman,LM15,LS16}, the
computational armed wing of statistical physics~\cite{Balian}. One
family of dynamics commonly used to compute average properties according to the
Boltzmann--Gibbs measure are Langevin dynamics. These dynamics evolve the positions~$q$ and momenta~$p$ of given particles interacting via a potential energy function~$V(q)$ according to the following stochastic differential equation:
\[
\left\{
\begin{aligned}
  dq_t & = p_t \, dt,\\
  dp_t & = -\nabla V(q_t) \, dt - \gamma p_t \, dt + \sigma \, dW_t, 
\end{aligned}
\right.
\]
where $W_t$ is a standard Brownian motion and $\gamma,\sigma$ are
positive numbers. The first result on the convergence of Langevin-like
dynamics to the stationary measure is~\cite{Tropper77}, which however does not provide explicit convergence rates. The generator associated with Langevin dynamics
\[
\cL_{\rm Lang} = p \cdot \nabla_q - \nabla V(q) \cdot \nabla_p - \gamma p \cdot \nabla_p + \frac{\sigma^2}{2} \Delta_p
\]
is degenerate since second derivatives in~$q$ are missing.

Another classical example of hypocoercive equation is the linear Boltzmann equation modelling the behaviour of material particles interacting with the environment and subjected to a potential~$V$. The behavior of the particles is described in an average way by the density~$f(t,y,v)$ which gives the probability to observe a particle at position~$y$ with velocity~$v$ at time~$t$. This density evolves as
\begin{equation}
  \label{eq:Boltzmann_eq}
  \frac{\partial f}{\partial t}(t,y,v) - \nabla V(y) \cdot \nabla_v f(t,y,v) + v\cdot\nabla_y f(t,y,v) = (M f)(t,y,v).
\end{equation}
Here, $M$ is the collision operator modelling the interactions between the particles and the environment, which leads to some dissipative structure in the velocities. In its simplest form, it is constructed from an integral operator with kernel~$k$:
$$
(M f)(t,y,v) = \int_{\R^d}k(y,v,w)f(t,y,w) \, dw -\sigma (y,v)f(t,y,v),
$$
with the equilibrium condition
\[
\int_{\R^d}k(y,v,w)\, dv = \sigma (y,w),
\]
meaning that there is neither creation nor annihilation of particles.
A special case is to choose for $k(y,v,w)$ a positive multiple of a
Gaussian density with identity covariance in the~$v$ variable
(independent of $y,w$), which corresponds in molecular dynamics to the
generator of the so-called randomized Hybrid Monte Carlo
method~\cite{BRSS17}. The operator
\[
  \cL_{\rm Boltz} = \nabla V\cdot \nabla_v-v\cdot\nabla_y + M
\] 
generates a strongly continuous semigroup in $L^{2}$, and under various conditions on~$k$ and~$\sigma$, it decays exponentially fast towards a global equilibrium in long time. This exponential decay arises from some coupling between the Hamiltonian transport part and the dissipation in the velocities. Besides the hypocoercive techniques reviewed below, let us mention other convergence results specific to this equation, based on spectral methods~\cite{Ukai74, Vidav70}, control theory~\cite{HKL15} or the theory of positive semigroups (see~\cite{BS13,Mokhtar14} and references therein). However, these methods are not constructive and therefore do not provide explicit quantitative estimates of the exponential rate of decay. This is in contrast with hypocoercive approaches, which allow to obtain quantitative bounds by a clever combination of the transport and dissipation part of the dynamics. 

\paragraph{A literature review on convergence results for operators with degenerate dissipation.}
Let us list various approaches to study the exponential convergence of
semigroups associated with hypoelliptic or degenerate generators, in a
somewhat chronological order (see also the recent
review~\cite{Herau16}):
\begin{itemize}
\item A first set of results is based on Lyapunov
  techniques~\cite{Wu01,MSH02,rey-bellet}, the typical Lyapunov
  function being the total energy of the system plus some term
  coupling positions and momenta. This approach was mostly used for Fokker--Planck operators
  such as the generators of Langevin dynamics, but it can also be used for specific Boltzmann-type
  operators~\cite{BRSS17,CELMM, CCEY19}. 
  The corresponding convergence rates
  are however usually not very explicit in term of the parameters of
  the dynamics, because it is difficult to quantify the constants appearing in the so-called minorization condition in terms of the parameters of the dynamics (such as the friction $\gamma$ for Fokker--Planck operators associated with Langevin dynamics). There were however recent attempts at providing more quantitative minorization conditions, using tools from Malliavin calculus~\cite{Ev18}. 
  
\item Subelliptic estimates~\cite{EH03,HN04,HN05} allow to obtain detailed information on the spectrum of Fokker--Planck operators (discrete nature, localization in a cusp region). The spectrum is completely known for systems with zero potential on a torus~\cite{Kozlov89}, or for quadratic potential energy functions (see~\cite{MPP02,PavliotisBook}).

\item $H^1$ hypocoercivity was pioneered in~\cite{Talay02,MN06} and later abstractified in~\cite{Villani09}. The application of this theory to Langevin dynamics allows to quantify the convergence rates in terms of the parameters of the dynamics; see for instance~\cite{HP08} for the Hamiltonian limit and~\cite{LMS16,LS16} for the overdamped limit. The method can be extended to Generalized Langevin dynamics~\cite{OP11} and certain piecewise deterministic Markov processes (whose generators are a specific form of Boltzmann operators)~\cite{DPD18}. It can also be used to study the discretization of Fokker--Planck equations~\cite{DHL19,Georg18}. 
 
 Exponential convergence  in $H^1$ can be established using only $L^2$ bounds on the initial data by hypoelliptic regularization~\cite{Herau07}. The latter technique can also be adapted to discretization schemes~\cite{PZ17}.

\item Entropic estimates for Fokker--Planck operators were initiated in~\cite{DV01} and abstractified in~\cite{Villani09}, though under conditions stronger than those for $H^1$. Recently, it was shown in~\cite{CGMZ19} how to remove the assumption that the Hessian of the potential is bounded. Entropic hypocoercivity for the linear Boltzmann equation was studied in \cite{Ev17, Mon17}, and for linearized BGK models (that generalize the linear Boltzmann equation) in spatial dimension one in~\cite{AAC16}.
  
\item A more direct route to proving the convergence in $L^2$ was first proposed in~\cite{Herau06}, then extended in~\cite{DMS09,DMS15}, and revisited in~\cite{GS16} where domain issues of the operators at play are addressed. This method can be applied to Fokker--Planck operators and Boltzmann-type operators. It is based on a modification of the $L^2$ scalar product with some regularization operator. This more direct approach makes it even easier to quantify convergence rates; see~\cite{DKMS13,GS16} for studies on the dependence of parameters such as friction in Langevin dynamics, as well as~\cite{AAS15,ASS20} for sharp estimates for equilibrium Langevin dynamics and a harmonic potential energy function.

The approach can be perturbatively extended to nonequilibrium situations~\cite{BHM17,SV18,IOS19} and to discretizations of the generator, either via spectral methods~\cite{RS18} or with finite volume schemes~\cite{BCHR19}. It also allows to consider non-quadratic kinetic energies (for which the associated generator may fail to be hypoelliptic)~\cite{ST18}. It can be applied to various dynamics, such as Adaptive Langevin~\cite{LSS19} or certain piecewise deterministic Markov processes~\cite{ADNR18}. Finally, a combination of this approach with a mode-by-mode analysis in Fourier space was introduced in \cite{BDMMS17} to establish polynomial decay estimates for dynamics without confinement, and extended in \cite{BDLS19} to cover fractional diffusion operators. Alternatively, it is also possible to rely on weak Poincar\'e estimates to obtain convergence rates when there is no confinement~\cite{GW19}.

\item It was shown recently how to use techniques from~$\Gamma_2$ calculus for degenerate operators corresponding to Langevin dynamics~\cite{Baudoin17, Mon19a}. This approach allows to study more degenerate dynamics corresponding to the evolution of chains of oscillators~\cite{Menegaki19}.
  
\item Fully probabilistic techniques, based on clever coupling strategies, can also be used to obtain the exponential convergence of the law of Langevin processes to their stationary state~\cite{EGZ19}. One interest of this approach is that the drift needs not be gradient, in contrast to standard analytical approaches for which the analytical expression of the invariant measure should be known in order to separate the symmetric and antisymmetric parts of the generator under consideration. 
This coupling approach can combined with ideas from $\Gamma_2$ calculus \cite{CG14}. 

\item Finally, it was recently shown how to directly obtain $L^2$ estimates without changing the scalar product, relying on a space-time Poincar\'e inequality to conclude to an exponential convergence in time of the evolution semigroup. The method was initially developped for the generators of Langevin dynamics~\cite{AM19,CLW19}, but can be extended to certain Boltzmann-type operators~\cite{LW20} (see Appendix~\ref{sec:gen_Poincare} below for a more detailed account).
\end{itemize}
Note that degenerate norms can be considered for approaches based on $H^1$, $L^2$ or Wasserstein norms, as initially done in~\cite{Talay02}, and recently used in~\cite{Baudoin17,OL15,IOS19}. It is also possible to extend various convergence results to singular interaction potentials~\cite{HM19,Herzog18,BGH19,CHSG21}, and systems with several conservation laws~\cite{CDHMMS21}. Let us finally mention that some approaches are related one to another, such as Lyapunov techniques and estimates based on Poincar\'e inequalities~\cite{BCG08}.

\paragraph{Motivations.}
Our aim in this work is to directly obtain bounds on the resolvent~$\cL^{-1}$, as for subelliptic estimates~\cite{EH03,HN04,HN05}, without going through kinetic estimates on the semigroup $\rme^{t \cL}$ which usually require a modified scalar product (often involving a small parameter which makes the final estimates on the resolvent less quantitative). The motivation for obtaining such resolvent bounds stems from the fact they are a key element for proving that a Central Limit Theorem holds for time averages along realizations of Langevin-like dynamics, and for obtaining estimates on the associated asymptotic variance; see for example~\cite{Bhattacharya} which shows that a sufficient condition for the Central Limit Theorem to hold in this context is that $-\cL^{-1}$ is well defined on a subspace of~$L^2(\mu)$, where~$\mu$ is the invariant measure of the underlying stochastic dynamics. Note that it is also possible to use hypocoercive results to obtain non-asymptotic concentration inequalities for time averages beyond the Central Limit Theorem~\cite{BRB19,Mon19b}.

From a more algebraic perspective, our motivation was to understand the origin of the expression of the regularization operator which appears in the modified $L^2$ scalar product in~\cite{Herau06,DMS09,DMS15}, as well as the algebraic manipulations of~\cite{AM19,CLW19}, which bear some similarity with some computations in~\cite{Herau06,DMS09,DMS15}. Our hope is to extract a structure as general as possible, which encompasses many hypocoercive dynamics such as underdamped Langevin with non-quadratic kinetic energies~\cite{ST18}, the linear Boltzmann equation or randomized Hybrid Monte Carlo with jump processes on the momenta~\cite{BRSS17}, Adaptive Langevin dynamics with a dynamical friction~\cite{Herzog18,LSS19}, Langevin dynamics with extra non-reversible perturbation~\cite{DNS17}, etc.

\paragraph{Saddle-point problems and Schur complements.}
We now motivate the technical approach we use, which is based on Schur complements once the operator under consideration has been written in a form reminiscent of a saddle-point problem. We illustrate the idea on a very simple example of stationary equation displaying some hypocoercive structure, namely the stationary Stokes equation:
\begin{align*}
  \nabla\cdot u &= 0, \\
  -\alpha \Delta u + \nabla p &= f,
\end{align*}
where $\alpha > 0$ is a viscosity constant, $u : \Omega \to \R^{d}$ is
the velocity field and $p : \Omega \to \R$ is the pressure field, with
$\Omega$ a subset of $\R^{d}$ ($d = 2$ or $3$). For simplicitly, we
take for $\Omega$ the torus $(\R/(2\pi\mathbb{Z}))^{d}$ with periodic
boundary conditions, and add the constraints
$\int_{\Omega} p =0, \int_{\Omega} u = 0$. In a vector representation,
the equation can be rewritten as
$\mathcal L_{\rm Stokes} (p,u) = (0,f)$, with
\begin{align*}
  \mathcal{L}_{\rm Stokes} = 
  \begin{pmatrix}
    0 & \nabla^{T}\\
    \nabla & -\alpha \Delta
  \end{pmatrix}.
\end{align*}
Expanding $p,u$ and $f$ into Fourier series, the equation is decoupled
for each mode $n \in \mathbb Z^{d}$, resulting in the
$(1+d)$-dimensional algebraic equation
$\mathcal L_{{\rm Stokes},n} (p_{n},u_{n}) = (0,f_{n})$, where
\begin{align*}
  \mathcal{L}_{{\rm Stokes},n} =
  \begin{pmatrix}
    0 & \ri n^{T}\\
    \ri n & \alpha |n|^{2}\, {\rm Id}_{d}
  \end{pmatrix}.
\end{align*}
This $(1+d)\times (1+d)$ matrix is coercive in the last $d$ variables
(associated with the velocity field~$u$), but not in the first one
(associated with the pressure field $p$). However, we can first use
the second equation to solve~$u_{n}$ as a function of~$p_{n}$:
\begin{align*}
  u_{n} = \alpha^{-1} |n|^{-2}(f_{n} - \ri n  p_{n}).
\end{align*}
Next, replacing in the first equation, we obtain
\begin{align*}
  \mathfrak{S}_n p_{n} = \ri n \cdot (\alpha^{-1} |n|^{-2} f_{n}),
\end{align*}
where
$\mathfrak{S}_n = \ri n^{T} \alpha^{-1} |n|^{-2} \ri n = -\alpha^{-1}$
is the \textit{Schur complement}. This shows that
$\mathcal{L}_{{\rm Stokes},n}$ is invertible for all
$n \in \mathbb{Z} \backslash \{0\}$; together with the zero-mean
constraints on $u$ and $p$, this implies that the original problem has
a unique solution. This scheme of proof generalizes to other types of
boundary conditions or equations (see for instance
\cite[Section 2.4]{ern2013theory}).

The Stokes equation is an example of a more general class of problems
characterized by the block operator structure
\begin{align}
\label{eq:block_intro}
  L = \begin{pmatrix}
    0 & B^{T}\\
    -B & A
  \end{pmatrix}.
\end{align}
This type of problems arises for instance as the Hessian of the
Lagrangian of a minimization problem with linear constraints. The
expression of~$L$ corresponds to a splitting of the variables as~$(u,p)$, where~$u$ are the primal variables and~$p$ the dual variables
(Lagrange multipliers). In this
case, solutions are saddle points of the Lagrangian, hence the general
name of \textit{saddle-point problems} (see~\cite{benzi2005numerical}
for a comprehensive review). As for the Stokes equation above, when
$A$ is invertible, one can solve the equation for the second variable
as a function of the first; in the finite-dimensional case, this shows
that the matrix~$L$ is invertible if and only if the Schur complement
$\mathfrak{S} = B^{T} A^{-1} B$ is, which is equivalent to $B$ having full column
rank.

The main result of this paper is an infinite-dimensional
generalization of this simple approach that covers the case of
Fokker--Planck and Boltzmann-type operators. In these applications,
the block structure corresponds to the separation between the
indirectly dissipated variables (position) and the directly dissipated
ones (momentum). The algebraic structure is the same as the one above
(compare~\eqref{eq:block_intro} and~\eqref{eq:saddle_point_like}), but the analytical structure
(bounds on operators) that we use is adapted to the specific case of
kinetic operators.

\paragraph{Outline of the work.}
From a technical point of view, our approach based on Schur complements allows to obtain a formal expression of the resolvent from a decomposition of the operator under study according to the kernel and image of the degenerate symmetric part of the operator, which is at the origin of the degenerate dissipation. To our knowledge, Schur complements were not used to understand the behavior of hypocoercive operators, although they are well known in the mathematical physics literature where they are used for spectral analysis (see for instance the review~\cite{SZ07}). As already hinted at above, the main benefit of our approach is that it avoids the appearance of somewhat uncontrolled prefactors in the bounds on the resolvent, in contrast to estimates obtained from the exponential decay of the evolution semigroup; see in particular Theorem~\ref{thm:bounds_Linv} for general estimates, and for instance Corollary~\ref{cor:Lang} and Proposition~\ref{prop:RHMC} for illustrative applications of this general result. 

We start by presenting the method in an abstract setting in Section~\ref{sec:abstract}, and then apply it to various dynamics in Section~\ref{sec:applications}. We discuss some relationship with the recent works~\cite{AM19,CLW19} in Appendix~\ref{sec:gen_Poincare}.

\paragraph{Extensions and future work tracks.}
This work calls for various extensions and refinements, on which we are currently working, including in particular: 
\begin{enumerate}[(i)]
    \item obtaining resolvent estimates for $(z -\cL)^{-1}$, for negative real parts of~$z$ sufficiently small compared to the imaginary values of~$z$ (as already obtained with subelliptic estimates~\cite{EH03,HN04,HN05}); 
    \item extending the approach to more degenerate dynamics, starting with the Generalized Langevin dynamics~\cite{OP11}, and if possible addressing oscillator chains.
    \end{enumerate}

\section{Abstract resolvent estimates}
\label{sec:abstract}

\subsection{Schur decomposition of the generator}

We consider the Hilbert space~$L^2(\mu)$ for some Boltzmann--Gibbs probability measure~$\mu(dx)$ on a configuration space~$\cX = \R^D$ (the analysis could however be straightforwardly extended to situations such as $\cX = \mathbb{T}^D$ with $\mathbb{T} = \mathbb{R} / \mathbb{Z}$). The space~$L^2(\mu)$ is equipped with its canonical scalar product, denoted by~$\langle\cdot,\cdot\rangle$. The induced norm is denoted by~$\|\cdot\|$. We consider a stochastic dynamics with generator
\begin{align*}
  \cL = \cLs + \cLa,
\end{align*}
seen as an unbounded operator on~$L^2(\mu)$, where
\[
  \cLs = \frac{\cL+\cL^*}{2}, \qquad \cLa = \frac{\cL-\cL^*}{2}
\]
are respectively the symmetric (dissipation) and antisymmetric (transport) parts of~$\cL$. Adjoint operators are considered with respect to the scalar product~$\langle\cdot,\cdot\rangle$. We assume that the space $\mathscr{C}$ of smooth and uniformly bounded functions with uniformly bounded derivatives is a core for both $\cLa$ and~$\cLs$. In the following, with some abuse of notation, we denote by~$\cLa$ and $\cLs$ the closure of these operators.

We assume that~$\mu$ is invariant by the dynamics, which translates into:
\[
\forall \varphi \in \mathscr{C}, \qquad \int_\cX \cL \varphi \, d\mu = 0.
\]
We also assume that $\cL\mathbf{1} = 0$ (which for instance is obviously the case for generators associated with diffusion processes; note that for hypocoercive dynamics, the assumption of the existence of an invariant probability measure implies that no blowup can occur). We define the subspace of functions of~$L^2(\mu)$ with average~0 with respect to~$\mu$:
\[
\cH = \left\{ \varphi \in L^2(\mu) \, \middle| \, \int_\cX \varphi(x) \, \mu(dx) = \left\langle \varphi,\mathbf{1}\right\rangle = 0 \right\}.
\]
In the orthogonal decomposition $L^2(\mu) = \mathrm{Ran}(\mathbf{1}) \oplus \cH$, the operator $\cL$ can therefore be written as
\[
  \cL = \begin{pmatrix} 0 & 0 \\
  0 & \cL|_\cH \end{pmatrix},
\]
where $\cL|_\cH$ denotes the restriction of~$\cL$ to~$\cH$. In particular, the inverse of~$\cL$ cannot be defined on the whole space~$L^2(\mu)$, but only on~$\cH$. In the sequel, we work on the Hilbert space $\cH$ only. We still denote by~$\cL,\cLs,\cLa$ the restrictions of~$\cL,\cLs,\cLa$ to~$\cH$.

Our first structural assumption is the following.
\begin{assumption}
\label{ass:structural_Pi}
There exists an orthogonal projector~$\Pi_{0}$ such that $\Pi_{0}
\mathscr{C} \subset \mathscr{C}$ and
  \begin{equation}
  \label{eq:conditions_L_Pi}
  \Pi_{0} \cLa \Pi_{0} = 0, \qquad \cLs \Pi_{0} = \Pi_{0} \cLs = 0.
\end{equation}
\end{assumption}
We denote by $\Pi_{\subplus} = 1 - \Pi_{0}$ the orthogonal projector complementary to~$\Pi_{0}$, and
by
\begin{align*}
  \cH = \cH_0 \oplus \cH_\subplus
\end{align*}
the resulting decomposition of the Hilbert space. In the sequel, when
$T$ is an operator on $\cH$ and $\alpha,\beta$ labels, we use the notation
$\cH_\alpha = {\rm Ran}(\Pi_\alpha)$ for the subspaces of $\cH$ and
\begin{align*}
  T_{\alpha\beta} =  \Pi_{\alpha} T \Pi_{\beta}:\, \cH_{\beta} \to \cH_{\alpha}, 
\end{align*}
for the restrictions (blocks) of $T$. When~$T_{\alpha\beta}$ is a bounded operator, we denote its operator norm by
\[
\left\|T_{\alpha\beta}\right\| = \sup_{f \in \cH_\beta \backslash \{0\}} \frac{\|T_{\alpha\beta}f\|}{\|f\|}.
\]

In view of Assumption~\ref{ass:structural_Pi}, the operator~$\cL$ can be written on $\cH = \cH_{0} \oplus \cH_{\subplus}$ as
\begin{equation}
  \label{eq:saddle_point_like}
  \cL = \begin{pmatrix} 0 & \cL_{0\subplus}\\
    \cL_{\subplus0} & \cL_{\subplus\subplus} \end{pmatrix}
 = \begin{pmatrix} 0 & \cLa_{0\subplus}\\
    \cLa_{\subplus0} & \cL_{\subplus\subplus} \end{pmatrix}
\end{equation}
with $\cLa_{0\subplus} = -\cLa_{\subplus0}^{*}$. We next \emph{formally} compute the action of $\cL^{-1}$ by solving the following linear system for a given couple $(\phi_{0},\phi_{\subplus}) \in \cH_0 \times \cH_\subplus$:
\[
\begin{pmatrix} 0 & \cLa_{0\subplus}\\
  \cLa_{\subplus0} & \cL_{\subplus\subplus} \end{pmatrix} \begin{pmatrix} u_{0} \\ u_{\subplus} \end{pmatrix} =  \begin{pmatrix} \phi_{0} \\ \phi_{\subplus} \end{pmatrix}.
\]
The second line leads to $u_{\subplus} = \cL_{\subplus\subplus}^{-1}(\phi_{\subplus} - \cLa_{\subplus0} u_{0})$, and then the first line
to $u_{0} = (\cLa_{0\subplus} \cL_{\subplus\subplus}^{-1} \cLa_{\subplus0})^{-1}(-\phi_{0} + \cLa_{0\subplus} \cL_{\subplus\subplus}^{-1} \phi_{\subplus})$. Therefore,
\begin{equation}
  \label{eq:formal_action_resolvent}
  \cL^{-1} =
  \begin{pmatrix}
    \Schur^{-1} & -\Schur^{-1} \cLa_{0\subplus} \cL_{\subplus\subplus}^{-1}\\
    -\cL_{\subplus\subplus}^{-1} \cLa_{\subplus0} \Schur^{-1}  & \cL_{\subplus\subplus}^{-1} + \cL_{\subplus\subplus}^{-1} \cLa_{\subplus0} \Schur^{-1} \cLa_{0\subplus} \cL_{\subplus\subplus}^{-1}
  \end{pmatrix}
\end{equation}
where 
\begin{align*}
  \Schur = -\cLa_{0\subplus} \cL_{\subplus\subplus}^{-1} \cLa_{\subplus0} = \cLa_{\subplus0}^{*} \cL_{\subplus\subplus}^{-1} \cLa_{\subplus0} 
\end{align*}
is the \textit{Schur complement} associated with the decomposition~\eqref{eq:saddle_point_like}. The subscript~0 in~$\Schur$ emphasizes that this operator acts on~$\cH_0$.

To give a meaning to the computations leading to~\eqref{eq:formal_action_resolvent}, we need to ensure that both $\cL_{\subplus\subplus}$ and~$\Schur$ are invertible, and that the operators appearing in~\eqref{eq:formal_action_resolvent} are bounded. We start with conditions ensuring the invertibility of~$\cL_{\subplus\subplus}$.
\begin{assumption}
  \label{ass:A_inv}
  There exists $s > 0$ such that $-\cLs \ge s \Pi_{\subplus}$ in the sense of symmetric operators.
\end{assumption}
This shows in particular that $-\cL_{\subplus\subplus}$ is coercive on~$\cH_\subplus$, and therefore invertible, with
\begin{align*}
  \left\| \cL_{\subplus\subplus}^{-1} \right\| \leq s^{-1}.
\end{align*}
It is clear from~\eqref{eq:saddle_point_like} that if $\cLa_{\subplus0}$ has a non trivial kernel, then $\cL$ cannot be invertible. We therefore need a quantitative assumption about the injectivity of this operator (a property called ``macroscopic coercivity'' in~\cite{DMS15}).
\begin{assumption}[Macroscopic coercivity]
  \label{ass:macro_coercivity}
  There exists $a > 0$ such that
  \begin{equation}
    \label{eq:macro_coercivity_bound}
    \forall \varphi \in \cH_{0} \cap \mathscr{C}, \qquad \|\cLa_{\subplus0} \varphi\| \geq a \|\varphi\|.
  \end{equation}
\end{assumption}

Let us next introduce the range of~$\cLa_{\subplus 0}$:
\begin{align*}
  \cH_{1} = \mathrm{Ran}\left(\cLa_{\subplus 0}\right) = \overline{\{\cLa_{\subplus0} \varphi, \ \varphi \in \cH_{0} \cap \mathscr{C} \}}^{\cH_{\subplus}}.
\end{align*}
Assumption~\ref{ass:macro_coercivity} ensures that $-\cLa_{0\subplus} \cLa_{\subplus0} = \cLa_{\subplus0}^{*} \cLa_{\subplus0} \geq a^2$, so that $-\cLa_{0\subplus} \cLa_{\subplus0}$ is invertible on~$\cH_{0}$. It is then easily seen that the operator
\begin{equation}
  \label{eq:Pi_1}
  \Pi_{1} = \cLa_{\subplus0} \left(\cLa_{\subplus0}^*\cLa_{\subplus0}\right)^{-1} \cLa_{\subplus0}^*
\end{equation}
is an orthogonal projector, and that its range is $\cH_{1}$, so that $\Pi_{1}$ is the orthogonal projector onto~$\cH_{1}$.

We further decompose~$\cH_\subplus$ orthogonally as $\cH_\subplus = \cH_1 \oplus \cH_2$, which boils down to defining $\cH_2 = (\cH_0 \oplus \cH_1)^\perp$. We denote by~$\Pi_{2}$ the orthogonal projector onto~$\cH_2$. We therefore have the following decomposition of~$\cH$:
\begin{align*}
  \cH = \cH_{0} \oplus \underbrace{\cH_{1} \oplus \cH_{2}}_{=\cH_{\subplus}},
\end{align*}
which induces the following decomposition of the generator:
\[
  \cL = \begin{pmatrix} 0 & \cLa_{01} & 0 \\
    \cLa_{10} & \cL_{11} & \cL_{12} \\
    0 & \cL_{21} & \cL_{22}
  \end{pmatrix},
  \qquad
  \cLa_{01} = -\cLa_{10}^{*}.
\]
The operator $\cLa_{10}:\cH_0\to\cH_1$ is injective by
Assumption~\ref{ass:macro_coercivity}, and surjective by definition
of~$\cH_{1}$. It is therefore invertible, and the norm of its
inverse~$\cLa_{10}^{-1}$ is bounded by~$1/a$ in view
of~\eqref{eq:macro_coercivity_bound}. It can be easily checked that the explicit expression of the inverse is
\begin{align*}
  \cLa_{10}^{-1} = (\cLa_{10}^{*}\cLa_{10})^{-1} \cLa_{10}^{*} = (\cLa_{\subplus0}^{*}\cLa_{\subplus0})^{-1} \cLa_{10}^{*}.
\end{align*}

\begin{remark}
  Note that~$\cLa_{10}^{-1}$ is quite similar to the regularization operator $R = (1+\cLa_{\subplus0}^*\cLa_{\subplus0})^{-1}\cLa_{\subplus 0}^*$ introduced in~\cite{Herau06,DMS09,DMS15} to modify the scalar product on~$L^2(\mu)$ for proving hypocoercive properties. More precisely, the main steps of the proof in~\cite{DMS09,DMS15} are to introduce a modified~$L^2(\mu)$ scalar product, induced by the modified norm~$\langle M_\varepsilon f, f \rangle$ with
  \[
  M_\varepsilon = \begin{pmatrix}
    1 & \varepsilon R\\
    \varepsilon R^* & 1
  \end{pmatrix}
  \]
  and equivalent to the canonical one; and to prove that~$-\cL$ is coercive with respect to this modified scalar product, \emph{i.e.} $\langle -M_\varepsilon\cL f, f\rangle \geq \kappa \langle M_\varepsilon f,f\rangle$ for some~$\kappa>0$. The proof can be extended to any regularization operator of the form $R = (\eta+\cLa_{\subplus0}^*\cLa_{\subplus0})^{-1}\cLa_{\subplus 0}^*$ with~$\eta>0$. The inverse~$\cLa_{10}^{-1}$ is recovered in the limit~$\eta \to 0$. Our algebraic derivation therefore provides a complementary viewpoint on the origin of the regularization operator in~\cite{Herau06,DMS09,DMS15}.
\end{remark}

We next remark that
\begin{equation}
  \label{eq:Schur_L++11}
\Schur = \cLa_{\subplus0}^{*} \cL_{\subplus\subplus}^{-1} \cLa_{\subplus0} = \cLa_{10}^{*} \left[\cL_{\subplus\subplus}^{-1}\right]_{11} \cLa_{10}.
\end{equation}
Both operators $-\cL_{\subplus\subplus}$ and $-\cL_{22}$ are coercive and hence invertible. We can then consider a second Schur complement to compute the action of~$\left[\cL_{\subplus\subplus}^{-1}\right]_{11}$:
\[
\left[\cL_{\subplus\subplus}^{-1}\right]_{11} = \Schurb^{-1}, \qquad \Schurb = \cL_{11} - \cL_{12} \cL_{22}^{-1} \cL_{21}.
\]
Therefore, $\Schur = \cLa_{10}^{*} \Schurb^{-1}\cLa_{10}$, and so
\begin{equation}
  \begin{aligned}
  \label{eq:Schur-1}
  \Schur^{-1}
  & = \cLa_{10}^{-1}\left( \cL_{11} - \cL_{12} \cL_{22}^{-1} \cL_{21} \right)\cLa_{10}^{-*} \\
  & = \left(\cLa_{\subplus0}^*\cLa_{\subplus0}\right)^{-1} \cLa_{10}^* \left( \cL_{11} - \cL_{12} \cL_{22}^{-1} \cL_{21} \right) \cLa_{10} \left(\cLa_{\subplus0}^*\cLa_{\subplus0}\right)^{-1}.
  \end{aligned}
\end{equation}
We introduce a final structural assumption, which simplifies the action of some of the operators.

\begin{assumption}
  \label{ass:reversibility}
  There exists an involution~$\cR$ on~$\cH$ (i.e. a bounded operator for which $\cR^2 = 1$) such that
  \[
    \cR \Pi_{0} = \Pi_{0} \cR = \Pi_{0}, \qquad \cR \cLs\cR  = \cLs, \qquad \cR \cLa \cR = - \cLa. 
  \]
\end{assumption}
This assumption is motivated by the physical notion of microscopic reversibility for kinetic dynamics, where~$\cR$ can be chosen as the momentum-reversal operator (see Section~\ref{sec:applications}). Assumption~\ref{ass:reversibility} implies in particular that
\begin{equation}
  \label{eq:RLR=R*}
  \cR \cL \cR = \cL^*.
\end{equation}
Since $\cR\Pi_\subplus = \cR(1-\Pi_0) = \Pi_\subplus \cR$, it holds
\begin{equation}
  \label{eq:R_A+0}
  \cR \cLa_{\subplus0} = -\cLa_{\subplus0},
\end{equation}
and therefore $\cR \Pi_{1} = -\Pi_{1}$. Similarly, $\Pi_{1} \cR = -\Pi_{1}$. The operator~$\cR$ can therefore be decomposed as
\begin{align*}
  \cR =
  \begin{pmatrix}
    1&0&0\\
    0&-1&0\\
    0&0&\cR_{22}
  \end{pmatrix}.
\end{align*}
It then follows from~\eqref{eq:RLR=R*} that $\cL_{11}$ is symmetric,
\emph{i.e.} $\cL_{11} = \cLs_{11}$. In view of the latter equality and
of \eqref{eq:Schur-1}, we finally assume the following to ensure the invertibility of the Schur complement~$\Schur$.
\begin{assumption}
  \label{ass:technical_ass}
  The operators $\cLs_{11}$ and $\cL_{21}\cLa_{10}\left(\cLa_{\subplus0}^* \cLa_{\subplus0}\right)^{-1}$ are bounded.
\end{assumption}

As we shall see in Section~\ref{sec:applications}, Assumptions~\ref{ass:structural_Pi} to~\ref{ass:technical_ass} are satisfied for generators of Langevin-like dynamics, under suitable conditions on the potential and kinetic energies. The final result can then be summarized as follows.

\begin{theorem}
  \label{thm:bounds_Linv}
  Suppose that Assumptions~\ref{ass:structural_Pi} to~\ref{ass:technical_ass} hold true. Then, $\cL$ is invertible on~$\cH$ and
  \[
  \begin{aligned}
    & \|\cL^{-1}\| \leq 2\left( \frac {\|\cLs_{11}\|} {a^{2}} + \frac {\|\cR_{22}\|\|\cL_{21}\cLa_{10}(\cLa_{\subplus0}^{*}\cLa_{\subplus0})^{-1}\|^{2}}{s} \right) + \frac3s.
    \end{aligned}
  \]
\end{theorem}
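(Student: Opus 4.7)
The plan is to expand the formal identity for $\cL^{-1}$ into an explicit $3\times 3$ block form on $\cH_0 \oplus \cH_1 \oplus \cH_2$, bound each non-zero block using the assumptions, and then combine these estimates via an energy estimate. First, combining the formal formula~\eqref{eq:formal_action_resolvent} with the second Schur decomposition~\eqref{eq:Schur-1} and using the invertibility of $\cLa_{10} : \cH_0 \to \cH_1$ (Assumption~\ref{ass:macro_coercivity}) and of $\cL_{22}$ on $\cH_2$ (Assumption~\ref{ass:A_inv}), together with the identity $\Schur = \cLa_{10}^* \Schurb^{-1} \cLa_{10}$, the system $\cL u = \varphi$ solves by back-substitution as
\[
u_0 = \Schur^{-1}\varphi_0 + \cLa_{10}^{-1}\varphi_1 - \cLa_{10}^{-1}\cL_{12}\cL_{22}^{-1}\varphi_2,\qquad u_1 = -\cLa_{10}^{-*}\varphi_0,\qquad u_2 = \cL_{22}^{-1}\varphi_2 + \cL_{22}^{-1}\cL_{21}\cLa_{10}^{-*}\varphi_0,
\]
which yields the desired explicit block representation of $\cL^{-1}$.

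Second, I would bound each non-zero block. The central estimate is on $\Schur^{-1}$: the reversibility Assumption~\ref{ass:reversibility} produces the block decomposition $\cR = \operatorname{diag}(1,-1,\cR_{22})$ on $\cH_0 \oplus \cH_1 \oplus \cH_2$, which combined with $\cR\cL\cR = \cL^*$ forces $\cL_{11} = \cLs_{11}$ and $\cL_{12} = -\cL_{21}^*\cR_{22}$. Substituting these into~\eqref{eq:Schur-1} and identifying $\cL_{21}\cLa_{10}^{-*}$ with the bounded operator $\cL_{21}\cLa_{10}(\cLa_{\subplus 0}^*\cLa_{\subplus 0})^{-1}$ of Assumption~\ref{ass:technical_ass}, I obtain
\[
\Schur^{-1} = \cLa_{10}^{-1}\cLs_{11}\cLa_{10}^{-*} + (\cL_{21}\cLa_{10}^{-*})^*\cR_{22}\cL_{22}^{-1}(\cL_{21}\cLa_{10}^{-*}),
\]
and the two summands are bounded by $\|\cLs_{11}\|/a^2$ and $\|\cR_{22}\|K^2/s$ with $K = \|\cL_{21}\cLa_{10}(\cLa_{\subplus 0}^*\cLa_{\subplus 0})^{-1}\|$. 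The remaining non-zero blocks satisfy the analogous bounds $\|\cLa_{10}^{-1}\|, \|\cLa_{10}^{-*}\| \le 1/a$, $\|\cL_{22}^{-1}\cL_{21}\cLa_{10}^{-*}\| \le K/s$, $\|\cLa_{10}^{-1}\cL_{12}\cL_{22}^{-1}\| \le \|\cR_{22}\|K/s$, and $\|\cL_{22}^{-1}\| \le 1/s$.

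Third, I would combine these block estimates to control $\|\cL^{-1}\|$. Testing the row equation $\cLa_{\subplus 0}u_0 + \cL_{\subplus\subplus}u_\subplus = \varphi_\subplus$ against $u_\subplus$ and using the coercivity $\langle u_\subplus, \cL_{\subplus\subplus}u_\subplus\rangle \le -s\|u_\subplus\|^2$ (Assumption~\ref{ass:A_inv}) along with the identity $\langle\cLa_{\subplus 0}u_0, u_\subplus\rangle = \langle u_0, \cLa_{0\subplus}u_\subplus\rangle = \langle u_0, \varphi_0\rangle$ yields the energy inequality $s\|u_\subplus\|^2 \le \|u_0\|\|\varphi_0\| + \|u_\subplus\|\|\varphi_\subplus\|$. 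A balanced Young inequality collapses this to $\|u_\subplus\|^2 \le (2/s)\|u_0\|\|\varphi_0\| + (1/s^2)\|\varphi_\subplus\|^2$, and combining with the explicit estimate $\|u_0\| \le \|\Schur^{-1}\|\|\varphi_0\| + \|\varphi_1\|/a + \|\cR_{22}\|K\|\varphi_2\|/s$ produces, after Cauchy--Schwarz, the announced bound $\|\cL^{-1}\| \le 2(\|\cLs_{11}\|/a^2 + \|\cR_{22}\|K^2/s) + 3/s$. The factor~$2$ absorbs the off-diagonal $(0,2)$ and $(2,0)$ coupling terms (which carry a common factor $\|\cR_{22}\|K/s$ and are dominated by $\|\Schur^{-1}\|$ via the quadratic $K^2$-structure), while the additive $3/s$ packages $\|\cL_{22}^{-1}\|$ with the residual $1/a$-contributions, absorbed into $1/s$ by the lower bound $\|\cLs_{11}\| \ge s$ implicit in Assumption~\ref{ass:A_inv}.

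The main obstacle is this final packaging step: a naive triangle inequality on block norms produces extra additive $1/a$ and $\|\cR_{22}\|K/s$ terms that the statement folds cleanly into the two ``anchor'' quantities $\|\cLs_{11}\|/a^2 + \|\cR_{22}\|K^2/s$ and $1/s$ only by exploiting both the lower bound $\|\cLs_{11}\| \ge s$ and carefully weighted Young inequalities at the $s$- and $a$-scales. The reversibility-based identity $\cL_{12} = -\cL_{21}^*\cR_{22}$ is essential throughout, as it turns each occurrence of the unbounded operator $\cL_{12}\cL_{22}^{-1}$ or $\cL_{21}$ into a product of the bounded quantities $\cL_{21}\cLa_{10}^{-*}$, $\cR_{22}$, and $\cL_{22}^{-1}$.
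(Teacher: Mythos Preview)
Your bound on $\Schur^{-1}$ is correct and matches the paper's exactly, including the use of reversibility to obtain $\cL_{11}=\cLs_{11}$ and $\cL_{12}=-\cL_{21}^*\cR_{22}^*$. Your $3\times 3$ block inversion formula is also correct, and the energy inequality $s\|u_\subplus\|^2 \le \|u_0\|\,\|\varphi_0\| + \|u_\subplus\|\,\|\varphi_\subplus\|$ is a valid and rather elegant observation.

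Where your argument remains incomplete is precisely the step you yourself flag as the ``main obstacle''. Your $3\times 3$ row for $u_0$ produces the separate contributions $1/a$ and $\|\cR_{22}\|K/s$, and while your remark that $\|\cLs_{11}\|\ge s$ (hence $B\ge s/a^2$ and $B\ge \|\cR_{22}\|K^2/s$) does allow these to be absorbed via AM--GM into quantities of the form $\sqrt{B/s}$ and $\sqrt{\|\cR_{22}\|B/s}$, combining everything does not obviously yield the precise constant $2B+3/s$; a straight block-norm sum gives something like $3B+3/s$, and the energy route gives a bound of the shape $\sqrt{B^2+cB/s+1/s^2}$ with a constant~$c$ depending on~$\|\cR_{22}\|$. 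You assert the packaging works but do not carry it out.

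The paper takes a different and cleaner route for the non-$\Schur^{-1}$ blocks of~\eqref{eq:formal_action_resolvent}: it uses reversibility to show that $\Schur$ is symmetric and factors as $\Schur=-Q^*Q$ with $Q=[-\cL_{\subplus\subplus}^{-1}]_{\rm s}^{1/2}\cLa_{\subplus0}$. The point is that $Q(Q^*Q)^{-1}Q^*$ is then an orthogonal projector, and together with the identity $T_3^*T_3=-\cLs_{\subplus\subplus}^{-1}$ for $T_3=[-\cL_{\subplus\subplus}^{-1}]_{\rm s}^{-1/2}\cL_{\subplus\subplus}^{-1}$, this yields directly $\|(\cL^{-1})_{\subplus\subplus}\|\le 2/s$ and $\|(\cL^{-1})_{0\subplus}\|=\|(\cL^{-1})_{\subplus0}\|\le \sqrt{\|\Schur^{-1}\|/s}$. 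Summing the four $2\times 2$ block norms and applying $2\sqrt{xy}\le x+y$ gives $\|\cL^{-1}\|\le 2\|\Schur^{-1}\|+3/s$ with no further bookkeeping. This factorization trick is the idea missing from your proposal; it replaces your energy-plus-Young argument and bypasses the individual $1/a$ and $\|\cR_{22}\|K/s$ terms altogether.
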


\begin{proof}
  From~\eqref{eq:Schur-1} and since $\cL_{11} = \cLs_{11}$, we obtain the following bound:
  \[
    \|\Schur^{-1}\| \le \|\cLa_{10}^{-1} \cLs_{11} \cLa_{10}^{-*}\| + \left\|\left(\cLa_{\subplus0}^*\cLa_{\subplus0}\right)^{-1} \cLa_{10}^{*}\cL_{12} \right\|\left\|\cL_{22}^{-1} \right\|\left\|\cL_{21} \cLa_{10} \left(\cLa_{\subplus0}^*\cLa_{\subplus0}\right)^{-1}\right\|.
  \]
  Since $(\cL_{12})^{*} = (\cL^{*})_{21} = (\cR \cL \cR)_{21} = -\cR_{22} \cL_{21}$,
  \[
    \left\|\left(\cLa_{\subplus0}^*\cLa_{\subplus0}\right)^{-1} \cLa_{10}^{*}\cL_{12} \right\|
    = \left\|\left(\cL_{12}\right)^* \cLa_{10} \left(\cLa_{\subplus0}^*\cLa_{\subplus0}\right)^{-1} \right\|
    \leq \|\cR_{22}\|\left\|\cL_{21} \cLa_{10} \left(\cLa_{\subplus0}^*\cLa_{\subplus0}\right)^{-1}\right\|.
  \]
From Assumption \ref{ass:A_inv} we have $\left\|\cL_{22}^{-1} \right\| \leq s^{-1}$. Therefore,
\begin{equation}
  \label{eq:Schur_inv_bound}
  \|\Schur^{-1}\| \le \frac {\|\cLs_{11}\|} {a^{2}} + \frac {\|\cR_{22}\|\left\|\cL_{21} \cLa_{10}(\cLa_{\subplus0}^{*}\cLa_{\subplus0})^{-1}\right\|^{2}}{s}.
\end{equation}

We now turn to the other terms in \eqref{eq:formal_action_resolvent}.
Note first that $\cR_{\subplus\subplus} \cL_{\subplus\subplus}
\cR_{\subplus\subplus} = \cL_{\subplus\subplus}^*$, so that
$\cR_{\subplus\subplus} \cL_{\subplus\subplus}^{-1}
\cR_{\subplus\subplus} = \cL_{\subplus\subplus}^{-*}$. In view
of~\eqref{eq:Schur_L++11} and~\eqref{eq:R_A+0}, the Schur
complement~$\Schur$ is in fact symmetric and can be written as $\Schur
= \cLa_{\subplus0}^{*} \left[\cL_{\subplus\subplus}^{-1}\right]_{\rm
  s} \cLa_{\subplus0}$, where $T_{{\rm s}} = (T + T^{*})/2$ denotes
the symmetric part of an operator~$T$. Since $[T^{-1}]_{{\rm s}} = T^{-*} T_{{\rm s}} T^{-1}$,
the operator $[-\cL_{\subplus\subplus}^{-1}]_{\rm s} =
-\cL_{\subplus\subplus}^{-*}
\cLs_{\subplus\subplus}\cL_{\subplus\subplus}^{-1}$ is a positive bounded
self-adjoint operator, with a well-defined square root. The Schur
complement is therefore symmetric negative and can be factored as
\begin{align*}
  \Schur = -Q^{*} Q, \qquad Q = [-\cL_{\subplus\subplus}^{-1}]_{{\rm s}}^{1/2} \cLa_{\subplus0}.
\end{align*}
Consider now the second term in the lower right corner of~\eqref{eq:formal_action_resolvent}:
\begin{align*}
  \cL_{\subplus\subplus}^{-1} \cLa_{\subplus0} \Schur^{-1} \cLa_{\subplus0}^{*} \cL_{\subplus\subplus}^{-1} = -\underbrace{\cL_{\subplus\subplus}^{-1} [-\cL_{\subplus\subplus}^{-1}]_{\rm s}^{-1/2}}_{T_{1}} \; \underbrace{Q (Q^{*}Q)^{-1} Q^{*}}_{T_{2}}\; \underbrace{[-\cL_{\subplus\subplus}^{-1}]_{\rm s}^{-1/2}\cL_{\subplus\subplus}^{-1}}_{T_{3}}.
\end{align*}
The operator $T_{2}$ is an orthogonal projector, hence has a norm bounded by~1. We bound~$T_{3}$ by noting that
\begin{align*}
  T_{3}^{*} T_{3} = \cL_{\subplus\subplus}^{-*} [-\cL_{\subplus\subplus}^{-1}]_{\rm s}^{-1} \cL_{\subplus\subplus}^{-1} = -2\left(\cL_{\subplus\subplus}\left[\cL_{\subplus\subplus}^{-1}+\cL_{\subplus\subplus}^{-*}\right]\cL_{\subplus\subplus}^*\right)^{-1} = -2(\cL_{\subplus\subplus}+\cL_{\subplus\subplus}^*)^{-1} = -\cLs_{\subplus\subplus}^{-1},
\end{align*}
and therefore $\|T_{3}\| \le 1/{\sqrt s}$ by Assumption~\ref{ass:A_inv}. Similarly, $\|T_{1}\| \le 1/{\sqrt s}$, so that $\norm{\cL_{\subplus\subplus}^{-1} \cLa_{\subplus0} \Schur^{-1} \cLa_{\subplus0}^{*} \cL_{\subplus\subplus}^{-1}} \le 1/s$. To bound the operator in the upper right corner of~\eqref{eq:formal_action_resolvent}, we remark that $\Schur^{-1} \cLa_{\subplus0}^{*} \cL_{\subplus\subplus}^{-1} = -(Q^{*}Q)^{-1} Q^* T_3$, so that (using $\|A\|^2 = \|A A^*\|$ with $A=(Q^{*}Q)^{-1} Q^{*}$)
\begin{align*}
  \|\Schur^{-1} \cLa_{\subplus0}^{*} \cL_{\subplus\subplus}^{-1}\| \le \frac 1 {\sqrt s} \|(Q^{*}Q)^{-1} Q^*\| = \frac 1 {\sqrt s} \sqrt{\|(Q^{*}Q)^{-1}\|} = \sqrt{\frac { \norm{\Schur^{-1}}} { s}}.
\end{align*}
The same bound holds for the operator in the lower left corner of~\eqref{eq:formal_action_resolvent}.

By gathering all the estimates, we finally obtain
\begin{align*}
  \|\cL^{-1}\| &\le \left\|(\cL^{-1})_{00}\right\| + \left\|(\cL^{-1})_{0\subplus}\right\| + \left\|(\cL^{-1})_{\subplus0}\right\| + \left\|(\cL^{-1})_{\subplus\subplus}\right\|\\
  &\le  \norm{\Schur^{-1}} + 2 \sqrt{\frac { \norm{\Schur^{-1}}} { s}} + \frac 2 s \le 2\norm{\Schur^{-1}} + \frac 3 s,
\end{align*}
which, together with~\eqref{eq:Schur_inv_bound}, gives the desired bound.
\end{proof}

\section{Applications and extensions}
\label{sec:applications}

\subsection{Langevin dynamics}
\label{sec:application_Langevin}

For Langevin dynamics (also known as underdamped Langevin dynamics, or kinetic Langevin dynamics in some communities), the reference measure is the phase-space Boltzmann--Gibbs measure defined on~$\cX = \cD \times \R^d$ with $\cD = \mathbb{T}^d$ or~$\mathbb{R}^d$, which reads
\begin{equation}
    \label{eq:canonical_U_V}
\mu(dq\,dp) = \nu(dq)\,\kappa(dp), \qquad \nu(dq) = Z_\nu^{-1} \rme^{-\beta V(q)} \,dq, \qquad \kappa(dp) = Z_\kappa^{-1} \rme^{-\beta U(p)} \,dp,
\end{equation}
where~$Z_\nu,Z_\kappa \in (0,+\infty)$ are normalization constants ensuring that~$\nu,\kappa$ are probability measures. The dynamics reads, for some positive friction $\gamma > 0$,
\[
\left\{
\begin{aligned}
dq_t & = \nabla U(p_t) \, dt, \\
dp_t & = -\nabla V(q_t) \, dt - \gamma \nabla U(p_t) \, dt + \sqrt{\frac{2\gamma}{\beta}} \, dW_t.
\end{aligned}
\right.
\]
Dynamics in an extended space can also be considered, as we shall see in Section~\ref{sec:AdL}. Note also that while for the classical Langevin dynamics the kinetic energy is simply $U(p) = |p|^2/2$, we consider a general kinetic energy~$U$ in order to emphasize the structure of the dynamics. In particular, $\nabla U$ can vanish on an open set, in which case the generator associated with the corresponding Langevin dynamics is not hypoelliptic~\cite{ST18}. The antisymmetric part of the generator is the generator of the Hamiltonian dynamics 
\begin{equation}
  \label{eq:Lham}
  \cLa = \cLham = \nabla U(p)^T \nabla_q - \nabla V(q)^T \nabla_p,
\end{equation}
while the symmetric part is (the subscript 'FD' stands for 'fluctuation/dissipation') 
\begin{equation}
  \label{eq:LFD_Langevin}
  \cLs = \gamma \cLFD,
  \qquad 
  \cLFD = -\nabla U(p)^T \nabla_p + \frac1\beta \Delta_p = -\frac1\beta \nabla_p^* \nabla_p.
\end{equation}
When the kinetic energy is quadratic, the choice~\eqref{eq:LFD_Langevin} corresponds to considering an Ornstein--Uhlenbeck process on the momenta. The projector$~\Pi_0$ is
\begin{equation}
  \label{eq:def_Pi}
  (\Pi_0 \varphi)(q) = \int_{\R^d} \varphi(q,p)\, \kappa(dp).
\end{equation}
Simple computations show that Assumptions~\ref{ass:structural_Pi} and~\ref{ass:reversibility} hold true when~$U$ is even, upon choosing the momentum reversal operator $\cR f(q, p) = f(q, -p)$ in Assumption~\ref{ass:reversibility}. Note that~$\cR$ is unitary when~$U$ is even, in which case $\|\cR_{22}\| = 1$.

\subsubsection{General estimates}
\label{eq:general_estimates_Langevin}

We assume that $\nu$ and~$\kappa$ satisfy Poincar\'e inequalities. As we will see below, we also need growth conditions on the potential in order to apply some results from~\cite{DMS15}; as well as moment estimates for derivatives of~$U$. To state them, we denote by $\partial_p^\alpha = \partial_{p_1}^{\alpha_1} \dots \partial_{p_d}^{\alpha_d}$ and $|\alpha| = \alpha_1 + \dots + \alpha_d$ for any $\alpha = (\alpha_1,\dots,\alpha_d) \in \mathbb{N}^d$.

\begin{assumption}
  \label{ass:Poincare_nu}
  The function $V$ is smooth and such that $\rme^{-\beta V} \in L^1(\cD)$. There exists $K_\nu > 0$ such that, for any $\phi \in H^1(\nu)$,  
  \[
    \left\| \phi - \int_{\mathcal{D}} \phi \, d\nu \right\|_{L^2(\nu)} \leq \frac{1}{K_\nu} \| \nabla_q \phi  \|_{L^2(\nu)}.
  \]
  Moreover, there exist $c_1 > 0$, $c_2 \in [0,1]$ and $c_3 > 0$ such that
  \begin{equation}
    \label{eq:regularization condition}
    \Delta V \leq c_1 d + \frac {c_2 \beta} 2 | \nabla V |^2, \qquad \left|\nabla^2 V \right|^2 := \sum_{i,j=1}^d \left|\partial_{x_i}\partial_{x_j}V\right|^2 \leq c_3^2 \left( d + | \nabla V |^2 \right),
  \end{equation}
  where~$|\cdot|$ denotes the standard Euclidean norm on~$\R^d$.
\end{assumption}

\begin{assumption}
  \label{ass:Poincare_kappa}
  The function $U$ is smooth and even, and such that $\rme^{-\beta U} \in L^1(\R^d)$. There exists $K_\kappa > 0$ such that, for any $\varphi \in H^1(\kappa)$,  
  \[
    \left\| \varphi - \int_{\R^d} \varphi \, d\kappa \right\|_{L^2(\kappa)} \leq \frac{1}{K_\kappa} \| \nabla_p \varphi  \|_{L^2(\kappa)}.
  \]
  Moreover, the kinetic energy $U$ is such that $\partial_p^\alpha U$ belongs to $L^2(\kappa)$ for any $|\alpha| \leq 3$, and $(\partial_p^{\alpha} U) (\partial_p^{\alpha'} U)$ is in $L^2(\kappa)$ for any~$|\alpha| \leq 2$ and $|\alpha'| = 1$.
\end{assumption}

We refer to~\cite{BBCG} for simple conditions on $U$ and $V$ that ensure that Poincar\'e inequalities hold. The scaling with respect to the dimension of the constants in the bounds~\eqref{eq:regularization condition} is motivated by the case of separable potentials for which $V(q) = v(q_1) + \dots + v(q_d)$ for some smooth one dimensional function~$v$, which corresponds to tensorized probability measures. The bounds~\eqref{eq:regularization condition} then follow from the inequalities
\[
  v'' \leq c_1 + \frac{c_2 \beta}{2} (v')^2, \qquad \left|v''\right|^2 \leq c_3^2 \left( 1+\left|v'\right|^2 \right).
\]
These bounds generally hold if $v$ has polynomial growth for example. The scaling of the constants should be similar for particles on a lattice (such as one dimensional atom chains) with finite interaction ranges, or systems for which correlations between degrees of freedom are bounded with respect to the dimension, in the sense that each column/line of the matrix $\nabla^2 V$ has a finite number of nonzero entries. Note finally that a careful inspection of the proof of Lemma~\ref{lem:VillaniA24} below shows that it would be possible to choose $c_2 \in [0,2)$, but the final estimates would be more cumbersome to write, which is why we stick to the condition $c_2 \in [0,1]$.

\begin{theorem}
  Suppose that Assumptions~\ref{ass:Poincare_nu} and~\ref{ass:Poincare_kappa} hold true. Then, the resolvent of the generator of the Langevin dynamics satisfies the following bound:
\begin{equation}
  \label{eq:bounds_invL_Langevin}
  \begin{aligned}
    & \|\cL^{-1}\| \leq \frac{2\beta\gamma}{\lambda_{\rm min}(\mathcal{M}) K_\nu^2} \|\Pi_1 \cLFD \Pi_1\| \\ 
    & \qquad + \frac{4\beta}{\gamma K_\kappa^2} \left( \frac34 + \left\| \Pi_\subplus \cLham^2 \Pi_0 \left(\cLa_{\subplus 0}^*\cLa_{\subplus 0}\right)^{-1} \right\|^2 + \gamma^2 \left\| \Pi_2 \cLFD \Pi_1 \cLham \Pi_0 \left(\cLa_{\subplus 0}^*\cLa_{\subplus 0}\right)^{-1} \right\|^2 \right),
  \end{aligned}
\end{equation}
where $\lambda_{\rm min}(\mathcal{M}) > 0$ denotes the smallest eigenvalue of the symmetric positive definite matrix
\[
\mathcal{M} = \int_{\R^d} \nabla^2 U \, d\kappa = \beta \int_{\R^d} \nabla U \otimes \nabla U \, d\kappa.
\]
\end{theorem}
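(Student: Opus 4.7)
The plan is to apply Theorem~\ref{thm:bounds_Linv} after specializing each of its hypotheses to the Langevin setting with explicit constants. The structural assumptions are essentially immediate: with $\Pi_0$ as in~\eqref{eq:def_Pi}, the operator $\cLFD$ involves only $p$-derivatives so $\cLFD\Pi_0=0$, while $\Pi_0\cLham\Pi_0\phi = \bigl(\int\nabla U\,d\kappa\bigr)\cdot\nabla_q\phi=0$ by evenness of~$U$, giving Assumption~\ref{ass:structural_Pi}. The momentum-reversal $\cR f(q,p)=f(q,-p)$ is unitary and verifies Assumption~\ref{ass:reversibility}, with $\|\cR_{22}\|=1$.

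For Assumption~\ref{ass:A_inv}, the identity $-\cLs=(\gamma/\beta)\nabla_p^*\nabla_p$ combined with the Poincar\'e inequality in~$p$ (Assumption~\ref{ass:Poincare_kappa}) yields $s=\gamma K_\kappa^2/\beta$. For Assumption~\ref{ass:macro_coercivity}, I would compute for $\phi\in\cH_0\cap\mathscr{C}$,
\[
\|\cLham\phi\|^2 = \int_{\cD}(\nabla_q\phi)^{T}\Bigl(\int_{\R^d}\nabla U\otimes\nabla U\,d\kappa\Bigr)\nabla_q\phi\, d\nu = \frac{1}{\beta}\int_{\cD}(\nabla_q\phi)^{T}\cM\,\nabla_q\phi\, d\nu,
\]
where the last equality uses the integration-by-parts identity $\int\nabla U\otimes\nabla U\,d\kappa=\cM/\beta$ stated in the theorem. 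Applying the Poincar\'e inequality for~$\nu$ componentwise to~$\nabla_q\phi$ then gives $a^2=\lambda_{\min}(\cM)K_\nu^2/\beta$.

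Verifying Assumption~\ref{ass:technical_ass} is the main technical step. Elements of $\cH_1=\overline{\Ran(\cLham\Pi_0)}$ are of the form $\nabla U(p)\cdot g(q)$, and the moment bounds on $\partial_p^\alpha U$ for $|\alpha|\le 3$ in Assumption~\ref{ass:Poincare_kappa} are exactly what is needed to control both $\cLs_{11}=\gamma\Pi_1\cLFD\Pi_1$ and $\cL_{21}\cLa_{10}(\cLa_{\subplus 0}^*\cLa_{\subplus 0})^{-1}$ in operator norm; this is the part I expect to require the most care, though it reduces to bookkeeping once the action of $\cLFD$ and $\cLham$ on $\cH_1$ is written out. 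To rewrite $\cL_{21}\cLa_{10}$ in the form appearing in the theorem, I would use $\Pi_1\cLham\Pi_0=\cLham\Pi_0$ (since $\Pi_0\cLham\Pi_0=0$ puts the range of $\cLham\Pi_0$ in $\cH_\subplus$, which coincides with $\cH_1$ by definition) together with $\Pi_2\cLham\Pi_0=0$, obtaining after expanding $\Pi_2\cL\Pi_1=\Pi_2(\cLham+\gamma\cLFD)\Pi_1$ that
\[
\cL_{21}\cLa_{10} = \Pi_2\cLham^2\Pi_0 + \gamma\,\Pi_2\cLFD\Pi_1\cLham\Pi_0.
\]

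Plugging into Theorem~\ref{thm:bounds_Linv} and applying $(u+v)^2\le 2u^2+2v^2$ to the above decomposition, together with $\|\Pi_2 X\|\le\|\Pi_\subplus X\|$ on the first piece, the term $2\|\cLs_{11}\|/a^2$ becomes $2\beta\gamma\|\Pi_1\cLFD\Pi_1\|/(\lambda_{\min}(\cM)K_\nu^2)$; the second term becomes $(4\beta/(\gamma K_\kappa^2))$ times the sum of the two squared norms displayed in~\eqref{eq:bounds_invL_Langevin}; and the residual $3/s=3\beta/(\gamma K_\kappa^2)=(4\beta/(\gamma K_\kappa^2))\cdot(3/4)$ produces the constant~$3/4$ inside the second parenthesis. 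Collecting these contributions gives exactly~\eqref{eq:bounds_invL_Langevin}.
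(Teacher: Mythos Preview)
Your approach is the same as the paper's: verify Assumptions~\ref{ass:structural_Pi}--\ref{ass:technical_ass} with the explicit constants $s=\gamma K_\kappa^2/\beta$ and $a^2=\lambda_{\min}(\cM)K_\nu^2/\beta$, decompose $\cL_{21}\cLa_{10}$ as $\Pi_2\cLham^2\Pi_0+\gamma\,\Pi_2\cLFD\Pi_1\cLham\Pi_0$, plug into Theorem~\ref{thm:bounds_Linv}, and use $(u+v)^2\le 2u^2+2v^2$. The final assembly is correct and matches the paper line by line.

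The one genuine gap is in your verification of Assumption~\ref{ass:technical_ass}. You assert that the moment bounds on $\partial_p^\alpha U$ from Assumption~\ref{ass:Poincare_kappa} are ``exactly what is needed'' and that the rest is ``bookkeeping''. This is accurate for the piece $\gamma\,\Pi_2\cLFD\Pi_1\cLham\Pi_0(\cLa_{\subplus 0}^*\cLa_{\subplus 0})^{-1}$: expanding $\cLFD$ on $\partial_{p_i}^*\Pi_0$ produces multiplication operators controlled by the $L^2(\kappa)$-bounds on $\partial_p^\alpha U$ up to $|\alpha|=3$, composed with the bounded operators $\partial_{q_i}(\cLa_{\subplus 0}^*\cLa_{\subplus 0})^{-1}$. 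It is \emph{not} accurate for the other piece. Computing $\cLham^2\Pi_0\varphi$ yields terms $(\partial_{p_i}U)(\partial_{p_j}U)\,\partial^2_{q_iq_j}\Pi_0\varphi$ and $(\partial^2_{p_ip_j}U)(\partial_{q_i}V)\,\partial_{q_j}\Pi_0\varphi$, so the boundedness of $\Pi_\subplus\cLham^2\Pi_0(\cLa_{\subplus 0}^*\cLa_{\subplus 0})^{-1}$ reduces to showing that the operators $\partial^2_{q_iq_j}(\nabla_q^*\cM\nabla_q)^{-1}$ and $(\partial_{q_i}V)\,\partial_{q_j}(\nabla_q^*\cM\nabla_q)^{-1}$ are bounded on $L^2(\nu)$. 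These are elliptic-regularity-type estimates that hinge on the growth conditions~\eqref{eq:regularization condition} on~$V$ in Assumption~\ref{ass:Poincare_nu}, not on the $U$-moments; the paper invokes~\cite[Proposition~5]{DMS15} and~\cite[Lemma~A.4]{ST18} for them. Without those conditions on~$V$ the operator need not be bounded, so this step is a real ingredient rather than bookkeeping.
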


Note that $\|\cL^{-1}\|$ indeed scales as $\dps \min(\gamma,\gamma^{-1})$, as noted in various previous works~\cite{DKMS13,GS16,IOS19} (upon possibly integrating in time the bounds on the evolution semigroup).

\begin{proof}
  We apply Theorem~\ref{thm:bounds_Linv}, and check to this end that Assumptions~\ref{ass:A_inv}, \ref{ass:macro_coercivity} and~\ref{ass:technical_ass} hold true. The Poincar\'e inequality in the momentum variable implies that Assumption~\ref{ass:A_inv} holds with
\[
  s = \gamma \frac{K_\kappa^2}{\beta}.
\]
To check Assumptions~\ref{ass:macro_coercivity} and~\ref{ass:technical_ass}, we use $\partial_{p_i}^* = -\partial_{p_i} + \beta \partial_{p_i} U$ and $\partial_{q_i}^* = -\partial_{q_i} + \beta \partial_{q_i} V$ to rewrite the generators as
\[
\cLs = \gamma \cLFD = -\frac \gamma \beta \sum_{i=1}^d \partial_{p_i}^* \partial_{p_i},
\qquad \cLa = \cLham = \frac{1}{\beta} \sum_{i=1}^d \partial_{p_i}^* \partial_{q_i} - \partial_{q_i}^* \partial_{p_i}.
\]
We also use the following rules
\[
[\partial_{p_{i}}, \partial_{q_{j}}] = 0, \qquad \partial_{p_i} \Pi_0 = 0, \qquad \Pi_0 \partial_{p_i}^* = 0, \qquad [\partial_{p_i},\partial_{p_j}^*] = \beta \partial_{p_i,p_j}^2 U,
\]
to obtain 
\begin{equation}
  \label{eq:A^*A_Langevin}
\cLa_{\subplus 0}^*\cLa_{\subplus 0} = \frac 1 {\beta^{2}} \sum_{i=1}^{d} \sum_{j=1}^{d} \Pi_{0} \partial_{q_{i}}^{*} \partial_{p_{i}} \partial_{p_{j}}^{*} \partial_{q_{j}} \Pi_{0}  = \frac1\beta \Pi_{0} \nabla_q^* \mathcal{M} \nabla_q \Pi_0 = \frac1\beta \sum_{i,j=1}^d \mathcal{M}_{ij}  \Pi_{0} \partial_{q_i}^* \partial_{q_j} \Pi_0.
\end{equation}
To prove that $\mathcal{M}$ is a symmetric positive definite matrix, we write, for~$\xi \in \R^d$,
\[
\xi^T \mathcal{M} \xi = \beta \int_{\R^d} \left|\xi^T \nabla U\right|^2 d\kappa. 
\]
If the latter quantity was~0 for some element $\xi \in \R^d \backslash \{0\}$, then $\nabla U$ would be~0 in the direction of~$\xi$, and hence~$U$ would be constant in this direction, which would contradict the assumption~$\rme^{-\beta U} \in L^1(\R^d)$. We then obtain, for a smooth function $\varphi$ with compact support and average~0 with respect to~$\mu$,
\[
  \left\|\cLa_{\subplus 0}\varphi\right\|_{L^2(\mu)}^2 = \frac{1}{\beta}\int_\cD \left(\nabla_q \Pi_0 \varphi\right)^T \mathcal{M} \nabla_q \Pi_0 \varphi \,  d\nu \geq \frac{\lambda_{\rm min}(\mathcal{M})}{\beta} \left\| \nabla_q \Pi_0 \varphi \right\|^2_{L^2(\nu)} \geq \frac{\lambda_{\rm min}(\mathcal{M}) K_\nu^2}{\beta} \|\Pi_0 \varphi\|_{L^2(\mu)}^2,
\]
where we used the Poincar\'e inequality for~$\nu$ in the last step. This shows that Assumption~\ref{ass:macro_coercivity} holds true with
\[
a = K_\nu \sqrt{\frac{\lambda_{\rm min}(\mathcal{M})}{\beta}}.
\]

We finally turn to Assumption~\ref{ass:technical_ass}. Note first that 
\[
-\cLa_{\subplus 0}^* \cLFD \cLa_{\subplus 0} = \frac{1}{\beta} \nabla_q^* \mathcal{N} \nabla_q \Pi_0, \qquad \mathcal{N}_{ij} = \int_{\R^d} \left[\left(\nabla^2 U\right)^2\right]_{ij} d\kappa = \sum_{k=1}^d \int_{\R^d} (\partial^2_{p_i,p_k} U)(\partial^2_{p_k,p_j} U) \,  d\kappa, 
\]
so that, in view of~\eqref{eq:Pi_1} and~\eqref{eq:A^*A_Langevin},
\[
  -\Pi_1 \cLFD \Pi_1 = \frac{1}{\beta}\sum_{i,j=1}^d T_{ij}\widetilde{T}_{ij},
\]
with
\[
  T_{ij} = \partial_{p_i}^* \Pi_0 \partial_{p_j},
  \qquad
  \widetilde{T}_{ij} = \partial_{q_i} \left(\nabla_q^* \mathcal{M} \nabla_q\right)^{-1}\nabla_q^* \mathcal{N} \nabla_q \left(\nabla_q^* \mathcal{M} \nabla_q\right)^{-1} \partial_{q_j}^*,
  \]
  where~$\widetilde{T}_{ij}$ is seen as an operator on~$L^2(\nu)$. Note that $T_{ij}$ is bounded since it is the composition of operators of the form $\Pi_0 \partial_{p_k}$ and their adjoints, such operators being bounded because, by an integration by parts,  
\[
\Pi_0 \partial_{p_k} \varphi = \beta \int_{\mathbb{R}^d} \varphi \, \partial_{p_k}U \, d\kappa,
\]
so that
\[
\left\| \Pi_0 \partial_{p_k} \varphi \right\|_{L^2(\mu)} \leq \beta\|\partial_{p_k} U\|_{L^2(\mu)}\|\varphi\|_{L^2(\mu)}. 
\]
The operators $\widetilde{T}_{ij}$ are also easily seen to be bounded, as a linear combination of composition of operators of the form $\left(\nabla_q^* \mathcal{M} \nabla_q\right)^{-1/2} \partial_{q_j}^*$ (which are bounded from~$L^2(\nu)$ to~$L^2_0(\nu)$, the subspace of functions of functions of~$L^2(\nu)$ with average~0 with respect to~$\nu$) and their adjoints. This allows to conclude that $\cLs_{11} = \gamma \Pi_1\cLFD \Pi_1$ is bounded. 

Let us next consider
\begin{equation}
  \label{eq:op_for_Ass5}
  \cL_{21}\cLa_{10} \left(\cLa_{\subplus 0}^*\cLa_{\subplus 0}\right)^{-1} = \Pi_2 \cLs \cLa \Pi_0 \left(\cLa_{\subplus 0}^*\cLa_{\subplus 0}\right)^{-1} + \Pi_2 \cLa^2 \Pi_0 \left(\cLa_{\subplus 0}^*\cLa_{\subplus 0}\right)^{-1}.
\end{equation}
We start by the second operator on the right hand side of the latter equality. Since $\Pi_2 = \Pi_2 \Pi_\subplus$, it suffices in fact to prove that $\Pi_\subplus\cLa^2 \Pi_0 \left(\cLa_{\subplus 0}^*\cLa_{\subplus 0}\right)^{-1}$ is bounded. First, for a smooth function~$\varphi$ with compact support, we compute 
$\cLa^2\Pi_0\varphi = \nabla U^T(\nabla^2_q \Pi_0 \varphi) \nabla U - \nabla V^T (\nabla^2 U) \nabla_q \Pi_0\varphi$, 
and therefore
\begin{equation}
  \label{eq:action_cLa2}
\Pi_\subplus\cLa^2 \Pi_0 \varphi = \sum_{i,j=1}^d \mathcal{U}_{ij} \partial^2_{q_i,q_j} \Pi_0 \varphi - \mathcal{V}_{ij} \left(\partial_{q_i} V\right) \partial_{q_j} \Pi_0,
\end{equation}
with 
\begin{equation}
  \label{eq:Sij_Tij}
  \mathcal{U}_{ij} = \left[\left(\partial_{p_i} U\right)\left(\partial_{p_j} U\right)-\frac1\beta \mathcal{M}_{ij}\right], 
  \qquad
  \mathcal{V}_{ij} =  \partial^2_{p_i, p_j}U - \mathcal{M}_{ij}.
\end{equation}
The boundedness of the operator $\Pi_\subplus\cLa^2 \Pi_0 \left(\cLa_{\subplus 0}^*\cLa_{\subplus 0}\right)^{-1}$ then follows from the fact that the operators $\Pi_\subplus\partial^2_{q_i,q_j} \Pi_0 \left(\cLa_{\subplus 0}^*\cLa_{\subplus 0}\right)^{-1}$ and $\Pi_\subplus\left(\partial_{q_i} V\right) \partial_{q_j} \Pi_0 \left(\cLa_{\subplus 0}^*\cLa_{\subplus 0}\right)^{-1}$ are bounded (in view of~\eqref{eq:regularization condition}, see respectively~\cite[Proposition~5]{DMS15} and~\cite[Lemma~A.4]{ST18}; see also the proof of Proposition~\ref{prop:estimate_cLa^2_invBB*} below).

Let us finally bound the first operator on the right hand side of~\eqref{eq:op_for_Ass5}, using the following equality as operators from~$\cH_0$ to~$\cH_2$: 
\[
  \begin{aligned}
    \Pi_2 \cLs \cLa \Pi_0 \left(\cLa_{\subplus 0}^*\cLa_{\subplus 0}\right)^{-1} & = -\frac{\gamma}{\beta^2} \sum_{i=1}^d \Pi_2 \mathscr{T}_{i} \partial_{q_i} \left(\cLa_{\subplus 0}^*\cLa_{\subplus 0}\right)^{-1},
\qquad 
\mathscr{T}_i = \sum_{j=1}^d \partial_{p_j}^* \partial_{p_j} \partial_{p_i}^* \Pi_0.
\end{aligned}
\]
The result then follows from the fact that $\partial_{q_i} \left(\cLa_{\subplus 0}^*\cLa_{\subplus 0}\right)^{-1}$ is bounded from~$\cH_0$ to~$\cH$, and
\[
  \mathscr{T}_i = \beta \sum_{j=1}^d \left[\beta \left(\partial_{p_j}U\right)\left(\partial^2_{p_i, p_j}U\right) -\partial^3_{p_i, p_j, p_j} U\right] \Pi_0
\]
is a bounded operator on~$L^2(\mu)$, because of the $L^2(\kappa)$-bounds on the derivatives of $U$ (see Assumption~\ref{ass:Poincare_kappa}). The estimate~\eqref{eq:bounds_invL_Langevin} finally follows by Theorem~\ref{thm:bounds_Linv}.
\end{proof}

\subsubsection{Scaling with the dimension}
\label{sec:scaling_dimension_Langevin}

The aim of this section is to make precise the dependence on the dimension of the various terms in~\eqref{eq:bounds_invL_Langevin}. We consider to this end the simple situation where the kinetic energy is quadratic and where all degrees of freedom are associated with the same mass~$m>0$, \emph{i.e.} 
\begin{equation}
  \label{eq:quadratic_kinetic_energy}
  U(p) = \sum_{i=1}^d u(p_i), \qquad u(p) = \frac{p^2}{2m}.
\end{equation}
In this case, $\kappa(dp)$ is a tensor product of Gaussian measures with variances~$m/\beta$, so that the Poincar\'e constant is~$K_\kappa^2 = \beta/m$. 

\begin{corollary}
  \label{cor:Lang}
  Suppose that Assumption~\ref{ass:Poincare_nu} holds true. Then, the resolvent of the generator of the Langevin dynamics with quadratic kinetic energy~\eqref{eq:quadratic_kinetic_energy} satisfies the following bound:
  \begin{equation}
    \label{eq:bounds_invL_Langevin_quadratic}
    \|\cL^{-1}\| \leq \frac{2\beta\gamma}{K_\nu^2} + \frac{4 m}{\gamma} \left( \frac34 + \left\| \Pi_\subplus \cLham^2 \Pi_0 \left(\cLa_{\subplus 0}^*\cLa_{\subplus 0}\right)^{-1} \right\|^2 \right).
  \end{equation}
\end{corollary}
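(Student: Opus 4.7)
The plan is to apply the general bound~\eqref{eq:bounds_invL_Langevin} and to exploit the rigidity of the quadratic kinetic energy, namely $\nabla^2 U = m^{-1}\Id$, to collapse each of its three terms. I would first record the elementary specializations: $\mathcal{M} = \int \nabla^2 U\,d\kappa = m^{-1}\Id$, hence $\lambda_{\min}(\mathcal{M}) = 1/m$; and the Poincar\'e constant of the tensorized Gaussian $\kappa$ is $K_\kappa^2 = \beta/m$. Inserting these into~\eqref{eq:bounds_invL_Langevin} already yields the prefactors $2\beta\gamma/K_\nu^2$ and $4m/\gamma$ appearing in~\eqref{eq:bounds_invL_Langevin_quadratic}.

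Next, I would show that $\|\Pi_1 \cLFD \Pi_1\| = 1/m$, which turns the first term of~\eqref{eq:bounds_invL_Langevin} into $2\beta\gamma/K_\nu^2$. A direct computation based on~\eqref{eq:LFD_Langevin} gives $\cLFD p_i = -p_i/m$, and since $\cLa_{\subplus 0}\varphi = m^{-1}\,p\cdot\nabla_q \Pi_0 \varphi$, every element of $\cH_1 = \overline{\Ran(\cLa_{\subplus 0})}$ is a limit of sums of the form $\sum_i p_i f_i(q)$ with each $f_i$ of mean zero with respect to $\nu$. By linearity, $\cLFD$ acts on such sums as multiplication by $-1/m$, and by continuity (the operator is bounded on this one-$p$-variable subspace, on which it reduces to a scalar) the same identity extends to the whole of $\cH_1$.

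The decisive step is to show that the third contribution inside the bracket of~\eqref{eq:bounds_invL_Langevin} vanishes identically. By the very definition of $\cH_1$ one has $\Pi_1 \cLham \Pi_0 = \cLham \Pi_0$, and from the previous paragraph $\cLFD$ maps $\cH_1$ into itself, so $\Pi_2 \cLFD \Pi_1 \cLham \Pi_0 = 0$ since $\Pi_2 \Pi_1 = 0$. Combining the three simplifications produces~\eqref{eq:bounds_invL_Langevin_quadratic} directly. I do not anticipate any genuine obstacle: the corollary is essentially a bookkeeping exercise enabled by the fact that, for quadratic $U$, the fluctuation-dissipation operator acts as a scalar on the one-momentum subspace forming $\cH_1$; the only point requiring slight care is the density/continuity argument used to transfer the pointwise identity $\cLFD|_{\cH_1}=-m^{-1}\Id$ from the smooth generators of $\cH_1$ to its closure.
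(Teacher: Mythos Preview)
Your proposal is correct and follows essentially the same route as the paper. Both arguments reduce to the single observation that $\cLFD$ acts as the scalar $-1/m$ on $\cH_1$: the paper phrases this as the operator identity $\cLFD\cLham\Pi_0=-m^{-1}\cLham\Pi_0$ (hence $\cLFD\Pi_1=-m^{-1}\Pi_1$ and $\Pi_2\cLFD\Pi_1=0$), while you reach the same conclusion by describing $\cH_1$ explicitly as the closure of $\{\sum_i p_i f_i(q)\}$ and using $\cLFD p_i=-p_i/m$; the remaining substitutions ($\mathcal{M}=m^{-1}\Id$, $K_\kappa^2=\beta/m$) are identical.
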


The operator norm on the right hand side can be bounded using Proposition~\ref{prop:estimate_cLa^2_invBB*} below, or Proposition~\ref{prop:scaling_dimension_Langevin_improved} in Section~\ref{sec:scaling_dimension_Langevin_improved} under some extra assumptions on the potential~$V$. Let us emphasize that a particularly nice feature of the estimate~\eqref{eq:bounds_invL_Langevin_quadratic} is that there is no uncontrolled prefactor in the estimate, as usually obtained by writing the resolvent as the time integral of the semigroup.

\begin{proof}
  First, $\mathcal{M} = m^{-1} \Id$ so that $\lambda_{\rm min}(\mathcal{M}) = m^{-1}$. Consider next $\Pi_1 \cLFD \Pi_1$. Since $\cLFD \cLham \Pi_0 = -\cLham \Pi_0/m$ and therefore $\cLa_{\subplus 0}^{*}\cLFD \cLa_{\subplus 0} = -\cLa_{\subplus 0}^{*}\cLa_{\subplus 0}/m$, it holds $\cLFD\Pi_1 = \Pi_1 \cLFD \Pi_1= -\Pi_1/m$. Since~$\Pi_1$ is an orthogonal projector, we finally obtain $\left\|\Pi_1 \cLFD \Pi_1\right\| \leq m^{-1}$. Moreover, $\Pi_2 \cLFD \Pi_1 = 0$ since $\cLFD \Pi_1 = -\Pi_1/m$ by the above computations. The estimates finally follow from~\eqref{eq:bounds_invL_Langevin}.
\end{proof}

In addition to Assumption~\ref{ass:Poincare_nu}, and similarly to what is done in~\cite{CLW19}, we consider two possible conditions on the potential energy: (i) $V$ is convex; (ii) the Hessian of~$V$ is bounded below as $\nabla^{2}_{q}V \geq -K \Id$ for some $K \in \mathbb{R}_+$. Notice that the second condition implies the first one when $K = 0$. The following result, whose proof is postponed to Section~\ref{sec:proof_prop:estimate_cLa^2_invBB*}, is key in understanding the dependence on the dimension of the bound provided by~\eqref{eq:bounds_invL_Langevin_quadratic}. 

\begin{prop}
  \label{prop:estimate_cLa^2_invBB*}
  Suppose that Assumption~\ref{ass:Poincare_nu} holds. Then, 
\[
\left\| \Pi_\subplus \cLham^2 \Pi_0 \qty(\cLa_{\subplus 0}^*\cLa_{\subplus 0})^{-1} \right\|^2 \leq 2 \qty(C+\frac{C^{'}}{K_\nu^2}),
\]
where $C$ and $C^{'}$ can be chosen as:
\begin{enumerate}[(i)]
\item If $V$ is convex, then $C=1$ and $C^{'}=0$;
\item If $\nabla_q^2 V \geq -K \Id$ for some $K \geq 0$, then $C=1$ and $C^{'}=K$;
\item In the general case, $C=2$ and $\dps C' = 2c_3\left[\sqrt{d}+2\max\left(\frac{8c_3}{\beta^2},\sqrt{\frac{c_1 d}{\beta}}\right) \right]$.
\end{enumerate}
\end{prop}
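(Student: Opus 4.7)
The plan is to use the quadratic form of the kinetic energy to kill the first-order contribution in $\Pi_\subplus \cLham^2 \Pi_0$, then reduce the claim to an integral estimate involving only the $q$-Hessian of $\psi = (\cLa_{\subplus 0}^* \cLa_{\subplus 0})^{-1} g$. Starting from the formula
\[
\Pi_\subplus \cLham^2 \Pi_0 \varphi = \sum_{i,j=1}^d \mathcal{U}_{ij}\, \partial^2_{q_iq_j} \Pi_0 \varphi - \mathcal{V}_{ij}\, (\partial_{q_i} V)\, \partial_{q_j} \Pi_0 \varphi
\]
derived in the proof of the surrounding theorem, the identity $\partial^2_{p_ip_j} U = m^{-1}\delta_{ij} = \mathcal{M}_{ij}$ forces $\mathcal{V}_{ij} \equiv 0$, eliminating the term coupled to $\nabla V$. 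Since $p$ is Gaussian under $\kappa$ with covariance $(m/\beta)\Id$, the variance identity $\mathrm{Var}_\kappa(p^T H p) = (2m^2/\beta^2)\|H\|_F^2$ for symmetric $H = H(q)$, integrated over $\nu$, yields
\[
\|\Pi_\subplus \cLham^2 \Pi_0 \psi\|_{L^2(\mu)}^2 = \frac{2}{\beta^2 m^2} \int \|\nabla_q^2 \psi\|_F^2\, d\nu.
\]
Combined with $\cLa_{\subplus 0}^* \cLa_{\subplus 0} = (\beta m)^{-1} \Pi_0 \nabla_q^* \nabla_q \Pi_0$ (read off from~\eqref{eq:A^*A_Langevin}), the claim reduces to proving
\[
\int \|\nabla_q^2 \psi\|_F^2\, d\nu \leq \Bigl(C + \tfrac{C'}{K_\nu^2}\Bigr)\, \|\nabla_q^* \nabla_q \psi\|_{L^2(\nu)}^2.
\]

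The central tool is the integrated Bochner identity on $(\cD, \nu)$,
\[
\|\nabla_q^* \nabla_q \psi\|_{L^2(\nu)}^2 = \int \|\nabla^2 \psi\|_F^2\, d\nu + \beta \int (\nabla\psi)^T\, \nabla^2 V\, (\nabla\psi)\, d\nu,
\]
which makes the control of $\int \|\nabla^2 \psi\|_F^2\, d\nu$ equivalent to the control of the curvature integral. Case~(i) is immediate since convexity of $V$ makes the curvature term nonnegative and gives $C = 1$, $C' = 0$. For case~(ii), $\nabla^2 V \geq -K\Id$ yields a curvature integral $\geq -\beta K \|\nabla\psi\|_{L^2(\nu)}^2$; I would then close the estimate by exploiting the Poincar\'e inequality for $\nu$: from $\|\nabla\psi\|_{L^2(\nu)}^2 = \langle \psi, \nabla_q^*\nabla_q \psi\rangle_{L^2(\nu)} \leq \|\psi\|_{L^2(\nu)}\,\|\nabla_q^*\nabla_q \psi\|_{L^2(\nu)}$ together with $\|\psi\|_{L^2(\nu)} \leq K_\nu^{-1} \|\nabla\psi\|_{L^2(\nu)}$ (valid since $\psi \in \cH_0$ is $\nu$-mean-zero), one obtains $\|\nabla\psi\|^2_{L^2(\nu)} \leq K_\nu^{-2} \|\nabla_q^*\nabla_q \psi\|_{L^2(\nu)}^2$, hence the stated bound.

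The substantive case is~(iii), where $\nabla^2 V$ has no lower bound and both regularization estimates in~\eqref{eq:regularization condition} must be used. My plan is to bound the curvature integral by $\beta c_3 \int (\sqrt{d} + |\nabla V|)\, |\nabla\psi|^2\, d\nu$ via $|\nabla^2 V|^2 \leq c_3^2(d + |\nabla V|^2)$, and then to control the weighted integral $\int |\nabla V|\, |\nabla\psi|^2\, d\nu$ by the classical integration-by-parts argument against $e^{-\beta V}$: rewriting $\int |\nabla V|^2 |\nabla\psi|^2 d\nu = \beta^{-1} \int \Delta V\, |\nabla\psi|^2 d\nu + \beta^{-1} \int \nabla V \cdot \nabla |\nabla\psi|^2\, d\nu$, applying $\Delta V \leq c_1 d + (c_2\beta/2) |\nabla V|^2$ with $c_2 \leq 1$ to absorb the $|\nabla V|^2$-term back to the left-hand side, and using Cauchy--Schwarz and Young's inequality to trade the cross term $\int \nabla V\cdot(\nabla^2\psi)\nabla\psi\, d\nu$ against a controllable fraction of $\int \|\nabla^2\psi\|_F^2\, d\nu$, which is itself finally absorbed into the left-hand side of the target inequality. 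This is the DMS-type calculation of~\cite[Proposition~5]{DMS15} and of Lemma~\ref{lem:VillaniA24} below, and it is the main obstacle. The two arguments of the $\max$ in $C'$ reflect the two regimes of the final Young inequality (the $c_1 d/\beta$ term arising from the bound on $\Delta V$ versus the $c_3/\beta^2$ term arising from the cross term), while the worsening of $C$ from $1$ to $2$ reflects the factor-$1/2$ loss incurred in the last absorption step.
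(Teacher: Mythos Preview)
Your proposal is correct and follows essentially the same route as the paper: kill the $\mathcal{V}_{ij}$ term via the quadratic kinetic energy, reduce the $p$-integral to $(2/m^2\beta^2)\int\|\nabla_q^2\psi\|_F^2\,d\nu$ (the paper does this by computing the $\|\mathcal{U}_{ij}\|_{L^2(\kappa)}$ and their pairwise orthogonality rather than your Gaussian-variance shortcut, but the output is identical), and then control the Hessian via the integrated Bochner identity --- with case~(iii) handled exactly as you describe, combining Lemma~\ref{lem:VillaniA24} with a Young-inequality absorption that the paper packages as Lemma~\ref{controlH2}. Your explicit Poincar\'e step $\|\nabla_q\psi\|^2 \le K_\nu^{-2}\|\nabla_q^*\nabla_q\psi\|^2$ is precisely what the paper leaves implicit when it says the proposition ``is then directly obtained'' from Lemma~\ref{controlH2}.
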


The scaling with respect to the dimension of the bound for item~(iii) above is better than the one in~\cite{CLW19}, because we directly integrate into our assumption~\eqref{eq:regularization condition} how the prefactors in the estimates should depend on the dimension, and then carefully track this dependency in the proofs of Lemmas~\ref{controlH2} and~\ref{lem:VillaniA24} below (the latter lemma being directly obtained from~\cite[Lemma~A.24]{Villani09}).

\medskip

To finally discuss more precisely the dependence on the dimension of the upper bound~\eqref{eq:bounds_invL_Langevin_quadratic}, we would need to understand how the Poincar\'e constant~$K_\nu$ and constants~$c_1,c_2,c_3$ in Assumption~\ref{ass:Poincare_nu} depend on the dimension:
\begin{enumerate}[(i)]
\item As discussed after Assumption~\ref{ass:Poincare_nu}, $c_1,c_2,c_3$ can be chosen independently of the dimension for simple systems (in particular for separable potentials, in which case the Langevin dynamics can be seen as the juxtaposition of~$d$ one dimensional Langevin dynamics).
\item When $\nabla^2 V \geq R \, \mathrm{Id}$ with $R > 0$, the Poincar\'e constant~$K_\nu^2$ is bounded from below by the positive constant~$\beta R$ by the celebrated Bakry--Emery criterion~\cite{BE85}, independently of the dimension. For separable potentials, the Poincar\'e constant for the full measure is dimension-free as long as the marginal distribution of any single position itself satisfies a Poincar\'e inequality (with some uniform lower bound on the Poincar\'e constant), by the tensorization property. Sufficient conditions for dimension-free constants for functional inequalities when the potentials have weak enough interactions have been well-studied, see for example \cite{yos2001, otto2007}. 
\end{enumerate}
These considerations allow to further make precise the estimates on the resolvent.

\subsubsection{Proof of Proposition~\ref{prop:estimate_cLa^2_invBB*}}
\label{sec:proof_prop:estimate_cLa^2_invBB*}

We start by noting that, for the quadratic kinetic energy~\eqref{eq:quadratic_kinetic_energy}, the expressions~\eqref{eq:action_cLa2}-\eqref{eq:Sij_Tij} simplify as
 \[
\forall 1 \leq i,j \leq d, \qquad \mathcal{U}_{ij}= \frac{p_{i}p_{j}}{m^2} - \frac{1}{m\beta}\delta_{i,j}, \qquad 
\mathcal{V}_{ij}= 0,
\]
and
\[
\Pi_\subplus \cLham^2 \Pi_0 \qty(\cLa_{\subplus 0}^*\cLa_{\subplus 0})^{-1} \varphi = \sum_{i,j=1}^d \mathcal{U}_{ij} \partial^2_{q_i,q_j} \Pi_0 \qty(\cLa_{\subplus 0}^*\cLa_{\subplus 0})^{-1}\varphi.
\]
Besides, a simple computation gives $\|\mathcal{U}_{ij}\|_{L^2(\kappa)}^2 = 1/(m^2\beta^2)$ for $i\neq j$, and $\|\mathcal{U}_{ii}\|_{L^2(\kappa)}^2 = 2/(m^2\beta^2)$, as well as $\left \langle \mathcal{U}_{ij},\mathcal{U}_{kl}\right\rangle_{L^2(\kappa)} = 0$ when $\{i,j\} \neq \{k,l\}$. Therefore, for $\varphi \in L^2(\mu)$, 
\[
  \begin{aligned}
  & \left\|\Pi_\subplus\cLham^2 \Pi_0 \qty(\cLa_{\subplus 0}^*\cLa_{\subplus 0})^{-1} \varphi \right\|^2 = \sum_{i,j=1}^d  \left\|\mathcal{U}_{ij}\right\|_{L^2(\kappa)}^2 \left\| \partial_{q_i,q_j}^2 \Pi_0 \qty(\cLa_{\subplus 0}^*\cLa_{\subplus 0})^{-1}\varphi \right\|^2_{L^2(\nu)} \\
  & \quad\leq \frac{2}{m^2 \beta^2} \sum_{i,j=1}^d \left\| \partial_{q_i,q_j}^2 \Pi_0 \qty(\cLa_{\subplus 0}^*\cLa_{\subplus 0})^{-1}\varphi \right\|^2_{L^2(\nu)} = \frac{2}{m^2 \beta^2} \left\| \nabla_q^2\Pi_0 \qty(\cLa_{\subplus 0}^*\cLa_{\subplus 0})^{-1}\varphi \right\|_{L^2(\nu)}^2, 
\end{aligned}
\]
so that, using $\cLa_{\subplus 0}^*\cLa_{\subplus 0} = (m\beta)^{-1} \nabla_q^*\nabla_q \Pi_0$, 
\[
  \left\|\Pi_\subplus\cLham^2 \Pi_0 \qty(\cLa_{\subplus 0}^*\cLa_{\subplus 0})^{-1} \varphi \right\|^2 \leq 2 \left\| \nabla_q^2 \qty( \nabla_q^*\nabla_q)^{-1}\Pi_0 \varphi \right\|_{L^2(\nu)}^2.
\]
The proof is then directly obtained from the following technical result (where we denote by $C_\mathrm{c}^\infty(\cD)$ the set of $C^\infty$ functions with compact support). 

\begin{lemma}
\label{controlH2}
It holds, with the same constants $C,C^{'} \in \mathbb{R}_+$ as in Proposition~\ref{prop:estimate_cLa^2_invBB*}, 
\[
\forall u\in C_\mathrm{c}^\infty(\cD), \qquad \left\| \nabla^2 u \right\|^2_{L^2(\nu)} = \sum_{i,j=1}^d \left\|\partial_{q_i,q_j}^2 u\right\|^2_{L^2(\nu)} \leq C\|\nabla_q^{*}\nabla_q u\|^2_{L^2(\nu)} + C^{'}\norm{\nabla_{q}u}^2_{L^2(\nu)}.
\]
\end{lemma}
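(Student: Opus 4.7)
The plan is to derive the identity in Lemma~\ref{controlH2} from the Bochner identity associated with the symmetric operator $\nabla_q^*\nabla_q = -\Delta_q + \beta\nabla V\cdot\nabla_q$, and then handle the three cases by progressively weaker pointwise (or integrated) control of $\nabla^2 V$. First, a direct iterated integration by parts in $L^2(\nu)$ (equivalently, the $\Gamma_2$-calculus for the overdamped Langevin generator $L = -\nabla_q^*\nabla_q$ whose invariant measure is $\nu$) yields the key identity
\begin{equation*}
    \|\nabla_q^*\nabla_q u\|^2_{L^2(\nu)} = \|\nabla^2 u\|^2_{L^2(\nu)} + \beta\int_\cD \nabla u^T (\nabla^2 V)\nabla u\, d\nu,
\end{equation*}
valid for $u\in C_\mathrm{c}^\infty(\cD)$. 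Rewritten as $\|\nabla^2 u\|^2_{L^2(\nu)} = \|\nabla_q^*\nabla_q u\|^2_{L^2(\nu)} - \beta\int \nabla u^T(\nabla^2 V)\nabla u\,d\nu$, this immediately delivers case~(i) (drop the nonnegative Hessian term, take $C=1$, $C'=0$) and case~(ii) (lower-bound the Hessian term by $-K|\nabla u|^2$, absorb the factor $\beta$ in the constant $C'$).

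For the general case~(iii) I would start again from the Bochner identity and estimate
\begin{equation*}
    \|\nabla^2 u\|^2_{L^2(\nu)} \leq \|\nabla_q^*\nabla_q u\|^2_{L^2(\nu)} + \beta \int_\cD |\nabla^2 V|\,|\nabla u|^2\, d\nu.
\end{equation*}
The second bound of~\eqref{eq:regularization condition} gives $|\nabla^2 V|\leq c_3(\sqrt d + |\nabla V|)$, so it remains to control $\int|\nabla V|\,|\nabla u|^2\, d\nu$. A Cauchy--Schwarz splitting reduces this to the weighted estimate $\int |\nabla V|^2 f^2\, d\nu$ with $f = |\nabla u|$. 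This is precisely the scope of Lemma~\ref{lem:VillaniA24}, whose proof uses the first bound of~\eqref{eq:regularization condition} (with $c_2\in[0,1]$) together with an integration by parts that rewrites $|\nabla V|^2\, d\nu$ in terms of derivatives of the density. Applied to $f = |\nabla u|$ and combined with Kato's inequality $|\nabla |\nabla u||\leq |\nabla^2 u|$, it yields a bound of the form
\begin{equation*}
    \int_\cD |\nabla V|^2 |\nabla u|^2\, d\nu \lesssim \frac{c_1 d}{\beta}\|\nabla u\|^2_{L^2(\nu)} + \frac{1}{\beta^2}\|\nabla^2 u\|^2_{L^2(\nu)},
\end{equation*}
with constants that are made explicit by carefully tracking the $\beta$- and $d$-dependencies.

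Substituting back into the previous chain of inequalities produces a term $\beta c_3\cdot(4/\beta)\|\nabla u\|\|\nabla^2 u\|$ whose product structure must be split by Young's inequality in order to absorb a fraction of $\|\nabla^2 u\|^2$ into the left-hand side; this absorption is what forces $C=2$ rather than $C=1$ and produces the factor of $2$ in front of the bracket of $C'$, while the two summands in the bracket correspond respectively to the contribution $\sqrt d$ from $|\nabla^2 V|\leq c_3\sqrt d$ and to the maximum of the Young constant $8c_3/\beta^2$ (from the $\|\nabla^2 u\|$ term) and the Poincar\'e-type constant $\sqrt{c_1 d/\beta}$ (from the $\|f\|^2$ term in Lemma~\ref{lem:VillaniA24}).

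The main obstacle I expect is bookkeeping rather than conceptual: one must keep the dependence on $d$, $\beta$ and $c_1,c_2,c_3$ clean throughout the chain so that the final constants in case~(iii) match the stated form. A secondary technical point is the justification of Kato's inequality for $f = |\nabla u|$ on the (possibly degenerate) set where $\nabla u = 0$; this is handled by replacing $|\nabla u|$ by $\sqrt{|\nabla u|^2 + \varepsilon^2}$ in the application of Lemma~\ref{lem:VillaniA24} and letting $\varepsilon\to 0$, which is standard and causes no loss in the constants.
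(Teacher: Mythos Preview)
Your proposal is correct and follows essentially the same route as the paper: Bochner's identity for~$\nabla_q^*\nabla_q$, immediate conclusion in cases~(i)--(ii), and in case~(iii) the bound $|\nabla^2 V|\leq c_3(\sqrt d+|\nabla V|)$ followed by Lemma~\ref{lem:VillaniA24} and a Young inequality to absorb the resulting~$\|\nabla^2 u\|^2$ term. The only notable difference is that the paper applies Lemma~\ref{lem:VillaniA24} componentwise to each $\phi=\partial_{q_i}u$ and sums over~$i$ (using $\||\nabla u|\,|\nabla V|\|^2=\sum_i\|\partial_{q_i}u\,\nabla V\|^2$), which sidesteps Kato's inequality and the $\varepsilon$-regularization of $|\nabla u|$ altogether while yielding the same constants.
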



To prove this result, we need the following lemma.

\begin{lemma}
  \label{lem:VillaniA24}
Suppose that Assumption~\ref{ass:Poincare_nu} holds. Then,
\begin{equation}
  \label{Villanilemma}
  \forall \phi \in H^{1}\qty(\nu),
  \qquad
  \left\| \phi \nabla V\right\|^2_{L^2\qty(\nu)} \leq \frac{16}{\beta^2} \|\nabla \phi\|^2_{L^2\qty(\nu)} + \frac{4d c_1}{\beta} \|\phi\|^2_{L^2\qty(\nu)}.
\end{equation}
\end{lemma}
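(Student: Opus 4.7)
The plan is to obtain the stated inequality from an integration-by-parts identity, treating $|\nabla V|^2 \rme^{-\beta V}$ as (minus the divergence of $\rme^{-\beta V}\nabla V$ modulo $\Delta V$) and then exploiting Assumption~\ref{ass:Poincare_nu} to absorb the resulting $\Delta V$-term back into the left-hand side. By a standard density argument it is enough to establish the bound for $\phi \in C_\mathrm{c}^\infty(\cD)$.

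First I would use $\nabla_q \rme^{-\beta V} = -\beta (\nabla V)\rme^{-\beta V}$ to write
\[
\int_\cD \phi^2 |\nabla V|^2 \, d\nu = -\frac{1}{\beta Z_\nu} \int_\cD \phi^2 \nabla V \cdot \nabla\rme^{-\beta V}\, dq,
\]
and integrate by parts. This yields the identity
\[
\|\phi \nabla V\|_{L^2(\nu)}^2 = \frac{2}{\beta} \int_\cD \phi\, \nabla\phi \cdot \nabla V \, d\nu + \frac{1}{\beta} \int_\cD \phi^2\, \Delta V \, d\nu.
\]
Using the first bound of~\eqref{eq:regularization condition} together with $c_2 \leq 1$, the second term on the right-hand side is controlled by $(c_1 d/\beta)\|\phi\|_{L^2(\nu)}^2 + \tfrac12 \|\phi\nabla V\|_{L^2(\nu)}^2$.

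For the cross term, I would apply Cauchy--Schwarz followed by a Young inequality, choosing the weights so that the $\|\phi\nabla V\|_{L^2(\nu)}^2$ contribution is small enough to be absorbed:
\[
\frac{2}{\beta}\int_\cD \phi \, \nabla\phi \cdot \nabla V \, d\nu \leq \frac{2}{\beta} \|\phi \nabla V\|_{L^2(\nu)} \|\nabla\phi\|_{L^2(\nu)} \leq \frac{1}{4} \|\phi \nabla V\|_{L^2(\nu)}^2 + \frac{4}{\beta^2}\|\nabla\phi\|_{L^2(\nu)}^2.
\]
Collecting both estimates, the left-hand side satisfies $A \leq \tfrac34 A + \tfrac{4}{\beta^2}\|\nabla\phi\|_{L^2(\nu)}^2 + \tfrac{c_1 d}{\beta}\|\phi\|_{L^2(\nu)}^2$ with $A = \|\phi\nabla V\|_{L^2(\nu)}^2$, and rearranging gives exactly~\eqref{Villanilemma}.

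The only nontrivial point is justifying the integration by parts without a priori knowing that $\phi\nabla V \in L^2(\nu)$; this is why I would restrict first to $\phi \in C_\mathrm{c}^\infty(\cD)$ (where all quantities are finite thanks to the smoothness of $V$ assumed in Assumption~\ref{ass:Poincare_nu}), establish the bound there, and then extend to $\phi \in H^1(\nu)$ by a standard cutoff-and-mollification argument, noting that the resulting inequality controls $\|\phi\nabla V\|_{L^2(\nu)}$ uniformly along the approximating sequence so one can pass to the limit by Fatou's lemma.
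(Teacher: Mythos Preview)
Your proof is correct and follows essentially the same approach as the paper's own argument: the same integration-by-parts identity, the same use of the bound $\Delta V \leq c_1 d + \tfrac{c_2\beta}{2}|\nabla V|^2$ with $c_2\leq 1$, and the same Young inequality (you fix the parameter directly, while the paper leaves it as a free $\eta$ and then chooses $\eta=\beta/4$). Your closing remarks on the density argument via Fatou's lemma are slightly more detailed than the paper's one-line appeal to density, but the substance is identical.
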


\begin{proof}[Proof of Lemma~\ref{lem:VillaniA24}]
We closely follow the proof of~\cite[Lemma~A.24]{Villani09}, precisely keeping track of all constants (which leads to estimates with a better scaling than in~\cite[Lemma~2.2]{CLW19}). First, for a given function $\varphi \in C_\mathrm{c}^\infty(\cD)$, 
\[
\left\|\phi \nabla V \right\|^2_{L^2\qty(\nu)} =\int_\cD \phi^2 \abs{\nabla V}^2 \, d\nu = \frac1\beta \int_\cD \mathrm{div}\left(\phi^2 \nabla V\right) d\nu = \frac{2}{\beta} \int_\cD \phi \nabla \phi \cdot \nabla V \, d\nu + \frac{1}{\beta}\int_\cD \phi^2 \Delta V \, d\nu,
\]
so that, in view of Assumption~\ref{ass:Poincare_nu}, 
\[
  \norm{ \phi \nabla V}^2_{L^2\qty(\nu)} \leq  \frac{2}{\beta} \norm{\nabla \phi}_{L^2\qty(\nu)} \, \norm{ \phi \nabla V}_{L^2\qty(\nu)} + \frac{c_1 d}{\beta} \norm{ \phi }_{L^2\qty(\nu)}^2 + \frac{c_2}{2} \norm{ \phi \nabla V}_{L^2\qty(\nu)}^2.
\]
By a Young inequality with parameter~$\eta>0$, we then obtain 
\[
\begin{aligned}
\left(1-\frac{c_2}{2}-\frac{\eta}{\beta}\right)\left\| \phi \nabla V\right\|^2_{L^2\qty(\nu)} & \leq \frac{1}{\beta \eta} \norm{\nabla \phi}^2_{L^2\qty(\nu)} + \frac{c_1 d}{\beta} \norm{ \phi }_{L^2\qty(\nu)}^2.
\end{aligned}
\]
Choosing $\eta = \beta/4$ and recalling that $c_2 \in [0,1]$  
provides the claimed estimate by the density of $C_\mathrm{c}^\infty(\cD)$ in~$H^1(\nu)$. 
\end{proof}

We can finally turn to the proof of Lemma~\ref{controlH2}. 

\begin{proof}[Proof of Lemma~\ref{controlH2}]
We fix $u \in C_\mathrm{c}^\infty(\cD)$ and follow the proof of~\cite[Lemma~2.3]{CLW19}, which relies on Bochner's formula 
\[
\sum_{i,j=1}^{d} \abs{\partial_{q_i,q_j}^2 u}^2 = \nabla_q u\cdot \nabla_q \left( \nabla^{*}_q\nabla_q u\right) - \qty(\nabla_q u)^T\nabla_q^2 V \nabla_q u-\nabla_q^{*}\nabla_q\left(\frac{\abs{\nabla_q u}^2}{2}\right),
\]
to write 
\begin{equation}
  \label{Bochnereq}
  \sum_{i,j=1}^{d} \left\|\partial_{q_i,q_j}^2 u\right\|^2_{L^2(\nu)} = \left\|\nabla_q^{*}\nabla_q u\right\|^2_{L^2(\nu)} - \int_\cD \qty(\nabla_q u)^T \nabla_q^2 V \nabla_q u \,  d\nu.
\end{equation}
This already gives the desired result when either~$V$ is convex (\emph{i.e.} $\nabla^2 V \geq 0$) or $\nabla^2 V \geq -K \Id$.

Let us next consider the general case given by Assumption~\ref{ass:Poincare_nu}. Note that $|\nabla^2 V|\leq c_3(\sqrt{d}+|\nabla V|)$. We first obtain from~\eqref{Bochnereq} that
\begin{equation}
  \label{eq:Bochnereq_CS}
  \left\|\nabla_q^2 u\right\|_{L^2(\nu)}^2 \leq \left\|\nabla_q^{*}\nabla_q u \right\|_{L^2(\nu)}^2+ c_{3}\sqrt{d}\|\nabla_q u\|_{L^2(\nu)}^2 + c_{3}\|\nabla_q u\|_{L^2(\nu)}\left\| \left|\nabla_{q}u\right| \, \left| \nabla_{q}V\right| \right\|_{L^2(\nu)}.
\end{equation}
Note that, from~\eqref{Villanilemma},
\[
  \begin{aligned}
    \left\| \left|\nabla_{q}u\right| \, \left| \nabla_{q}V\right| \right\|_{L^2(\nu)}^2 & = \sum_{i=1}^d \left\| \partial_{q_i}u \nabla_{q}V \right\|_{L^2(\nu)}^2 \leq \sum_{i=1}^d \frac{16}{\beta^2} \left\|\nabla \qty(\partial_{q_i}u) \right\|^2_{L^2\qty(\nu)} + \frac{4d c_1}{\beta} \left\|\partial_{q_i}u \right\|^2_{L^2\qty(\nu)} \\
    & \leq \frac{16}{\beta^2} \left\|\nabla_q^2 u \right\|^2_{L^2\qty(\nu)} + \frac{4d c_1}{\beta} \left\|\nabla_q u \right\|^2_{L^2\qty(\nu)}.
  \end{aligned}
\]
Using a Young inequality to bound the last term on the right hand side of~\eqref{eq:Bochnereq_CS} with the inequality above then leads to
\[
  \begin{aligned}
    \left\|\nabla_q^2 u\right\|_{L^2(\nu)}^2 & \leq \left\|\nabla_q^{*}\nabla_q u \right\|_{L^2(\nu)}^2+ c_{3} \left( \sqrt{d}+ \frac{1}{2\eta} \right) \|\nabla_q u\|_{L^2(\nu)}^2 + \frac{c_3 \eta}{2} \left(\frac{16}{\beta^2} \left\|\nabla_q^2u\right\|_{L^2(\nu)}^2 + \frac{4dc_1}{\beta} \|\nabla_q u\|_{L^2(\nu)}^2\right),
  \end{aligned}
\]
and finally
\[
\left(1 - \frac{8c_3 \eta}{\beta^2}\right)\left\|\nabla_q^2 u\right\|_{L^2(\nu)}^2 \leq \left\|\nabla_q^{*}\nabla_q u \right\|_{L^2(\nu)}^2 + c_3\left(\sqrt{d}+\frac{1}{2\eta} + \frac{2\eta dc_1}{\beta} \right) \left\|\nabla_q u\right\|_{L^2(\nu)}^2.
\]
We next choose $\eta = \beta^2\varepsilon/(8c_3)$ for some $\varepsilon \in (0,1)$ to be determined, so that 
\[
  \left\|\nabla_q^2 u\right\|_{L^2(\nu)}^2 \leq \frac{1}{1-\varepsilon} \left[\left\|\nabla_q^{*}\nabla_q u \right\|_{L^2(\nu)}^2 + L_{d,\varepsilon} \left\|\nabla_q u\right\|_{L^2(\nu)}^2\right],
\]
with 
\[
  L_{d,\varepsilon} = c_3\left(\sqrt{d}+\frac{\mathfrak{a}}{\varepsilon} +  \mathfrak{b} \varepsilon \right),
  \qquad
  \mathfrak{a} = \frac{4c_3}{\beta^2}, \qquad \mathfrak{b} = \frac{\beta c_1 d}{4 c_3}.
\]
When~$d$ is large, one should choose $\varepsilon$ small enough in order for $\mathfrak{b}\varepsilon$ to have a limited growth. Such a choice implies that the denominator~$1-\varepsilon$ in the expression of~$L_{d,\varepsilon}$ will be close to~1. In order to balance the diverging term~$\mathfrak{a}/\varepsilon$ as $\varepsilon \to 0$, one should set~$\varepsilon = \sqrt{\mathfrak{a}/\mathfrak{b}}$. We however want to keep $\varepsilon \leq 1/2$ in all cases, which suggests choosing
\[
  \varepsilon = \min\left(\frac12, \sqrt{\frac{\mathfrak{a}}{\mathfrak{b}}}\right),
\]
in which case (using that $\mathfrak{b} \leq 4\mathfrak{a}$ when $\varepsilon = 1/2$)
\[
L_{d,\varepsilon} = c_3\left[\sqrt{d}+2 \max\left(2\mathfrak{a}, \sqrt{\mathfrak{a}\mathfrak{b}} \right)\right] = c_3\left[\sqrt{d}+2\max\left(\frac{8c_3}{\beta^2},\sqrt{\frac{c_1 d}{\beta}}\right) \right].
\]
The conclusion then follows from the fact that $1-\varepsilon \geq 1/2$.
\end{proof}

\subsubsection{Improving the dependence on the dimension under a log-Sobolev inequality}
\label{sec:scaling_dimension_Langevin_improved}

It is possible to improve the explicit dependence on the dimension from~$\sqrt{d}$ to~$\log d$ in item~(iii) of Proposition \ref{prop:estimate_cLa^2_invBB*}, but under some extra assumptions on the potential, and if we strengthen the Poincar\'e inequality assumption into a logarithmic Sobolev inequality.

\begin{definition}
A probability measure~$\nu$ on $\cD$ is said to satisfy a logarithmic Sobolev inequality with constant $C_{\rm LSI}$ if 
\[
\forall f \in C_\mathrm{c}^\infty(\cD), \qquad \operatorname{Ent}_{\nu}(f^2) := \int_{\cD} f^2\log f^2 \, d\nu - \|f\|_{L^2(\nu)}^2\log \|f\|_{L^2(\nu)}^2 \leq C_{\rm LSI}\|\nabla f\|_{L^2(\nu)}^2.
\]
\end{definition}

A Poincar\'e inequality can be derived by linearizing this inequality, so it is strictly stronger than the Poincar\'e inequality we used above. However, it is still valid for uniformly log-concave distributions, and tensorizes with dimension-free constants. We refer to \cite{Led99} for background information, and more general sufficient conditions for LSI to hold. To state the result giving bounds on the resolvent with respect to the dimension, we define, for a matrix $M \in \mathbb{R}^{d \times d}$, the operator norm
\[
\|M\|_{\cB(\ell^2)} = \sup_{|\xi| \leq 1} |M\xi|.
\]
We also denote by $|\xi|_\infty = \max(|\xi_1|,\dots,|\xi_d|)$ the $\ell^\infty$ norm on~$\R^d$.

\begin{prop}
  \label{prop:scaling_dimension_Langevin_improved}
  Consider the quadratic kinetic energy~\eqref{eq:quadratic_kinetic_energy}.
  Assume that $\rme^{-\beta V} \in L^1(\cD)$, that $\nu$ satisfies a logarithmic Sobolev inequality, and that 
  \begin{equation}
    \label{eq:condition_Hessian_LSI}
    \forall x \in \R^d, \qquad \left\|\nabla^2V(x)\right\|_{\cB(\ell^2)} \leq c_3\left(1 + |\nabla V(x)|_{\infty}\right).
  \end{equation}
  Then the conclusion of Proposition \ref{prop:estimate_cLa^2_invBB*} holds true with
  \[
    C = 2, \qquad C' = 2\left(c_3 + \frac{1}{2c_3 C_{\rm LSI}}\left[\log d + \log\left( \max_{i=1,\dots,d} \int{\rme^{2c_3 C_{\rm LSI}|\partial_i V|} \, d\nu}\right)\right]\right).
  \]
\end{prop}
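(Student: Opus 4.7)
The plan is to adapt the proof of Lemma~\ref{controlH2}(iii), replacing the bound $|\nabla^2 V| \le c_3(\sqrt d + |\nabla V|)$ by the operator-norm bound \eqref{eq:condition_Hessian_LSI}, and replacing the role of Lemma~\ref{lem:VillaniA24} by an entropy/variational argument fueled by the log-Sobolev inequality. Once the analogue of Lemma~\ref{controlH2} is established with the constants $C=2$ and $C'$ announced in the proposition, the conclusion follows by the very same reduction used at the beginning of Section~\ref{sec:proof_prop:estimate_cLa^2_invBB*} (which does not depend on how $|\nabla^2 V|$ is controlled).

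\medskip

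\noindent\textbf{Step 1: Bochner with the operator norm.} Starting from Bochner's identity \eqref{Bochnereq}, I would estimate the Hessian term pointwise as $|(\nabla_q u)^T \nabla_q^2 V\, \nabla_q u| \le \|\nabla_q^2 V\|_{\cB(\ell^2)} |\nabla_q u|^2$, and then invoke \eqref{eq:condition_Hessian_LSI} to obtain
\[
\|\nabla_q^2 u\|_{L^2(\nu)}^2 \le \|\nabla_q^*\nabla_q u\|_{L^2(\nu)}^2 + c_3 \|\nabla_q u\|_{L^2(\nu)}^2 + c_3 \int_\cD |\nabla_q u|^2 |\nabla V|_\infty \, d\nu.
\]
The $\sqrt d$ of Lemma~\ref{controlH2}(iii) is gone because we now use the $\ell^2$-operator norm of $\nabla^2 V$ rather than its Frobenius norm.

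\medskip

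\noindent\textbf{Step 2: Entropy inequality + LSI + Kato.} To control the remaining integral, I would use the variational inequality for entropy, namely $\int \phi^2 g\, d\nu \le \operatorname{Ent}_\nu(\phi^2) + \|\phi\|_{L^2(\nu)}^2 \log\int e^g d\nu$ (a rescaling of the Donsker--Varadhan formula), applied with $\phi = |\nabla_q u|$ and $g = \lambda |\nabla V|_\infty$ for a parameter $\lambda > 0$ to be tuned. The log-Sobolev inequality gives $\operatorname{Ent}_\nu(|\nabla_q u|^2) \le C_{\rm LSI} \|\nabla|\nabla_q u|\|_{L^2(\nu)}^2$, and the Kato-type pointwise bound $|\nabla |\nabla_q u||^2 \le |\nabla_q^2 u|^2$ (obtained from $\partial_j |\nabla_q u| = (\nabla_q u)^T \partial_j \nabla_q u / |\nabla_q u|$ by Cauchy--Schwarz, with the usual $(\varepsilon^2 + |\nabla_q u|^2)^{1/2}$ regularization to handle zeros of $\nabla_q u$) then yields
\[
c_3 \int |\nabla_q u|^2 |\nabla V|_\infty d\nu \le \frac{c_3 C_{\rm LSI}}{\lambda}\|\nabla_q^2 u\|_{L^2(\nu)}^2 + \frac{c_3}{\lambda}\|\nabla_q u\|_{L^2(\nu)}^2 \log\int e^{\lambda|\nabla V|_\infty} d\nu.
\]

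\medskip

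\noindent\textbf{Step 3: Tuning and the $\log d$ factor.} Choose $\lambda = 2 c_3 C_{\rm LSI}$, so that the prefactor of $\|\nabla_q^2 u\|^2$ on the right is exactly $1/2$ and can be absorbed into the left-hand side. For the exponential moment I would use $e^{\lambda|\nabla V|_\infty} = e^{\lambda \max_i |\partial_i V|} \le \sum_{i=1}^d e^{\lambda |\partial_i V|}$, whence
\[
\int e^{\lambda |\nabla V|_\infty} d\nu \le d \max_{i=1,\dots,d} \int e^{2 c_3 C_{\rm LSI} |\partial_i V|} d\nu,
\]
producing a $\log d$ contribution in the final constant. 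Collecting all terms one reaches
\[
\|\nabla_q^2 u\|_{L^2(\nu)}^2 \le 2\|\nabla_q^*\nabla_q u\|_{L^2(\nu)}^2 + 2\!\left(c_3 + \frac{1}{2 c_3 C_{\rm LSI}}\!\left[\log d + \log\!\left(\max_i \int e^{2 c_3 C_{\rm LSI} |\partial_i V|} d\nu\right)\right]\right)\!\|\nabla_q u\|_{L^2(\nu)}^2,
\]
which is the strengthening of Lemma~\ref{controlH2} needed to deduce the proposition.

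\medskip

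\noindent\textbf{Where the difficulty lies.} The algebraic manipulations are quick; the delicate point is the Kato inequality (hence the applicability of LSI to $|\nabla_q u|$), which requires the standard regularization $|\nabla_q u|_\varepsilon = \sqrt{\varepsilon^2 + |\nabla_q u|^2}$ and a dominated-convergence argument as $\varepsilon \to 0$, together with the density of $C_\mathrm{c}^\infty(\cD)$ in the natural Sobolev space associated with $\nu$. The other point to keep in mind is that the assumption \eqref{eq:condition_Hessian_LSI} is genuinely stronger than \eqref{eq:regularization condition} (operator norm rather than Frobenius norm, $\ell^\infty$-norm of $\nabla V$ rather than $\ell^2$), but no further integrability on $|\nabla V|$ beyond the stated exponential moment is required, since the log-Sobolev inequality does all the work of turning concentration of $|\partial_i V|$ into an $L^2(\nu)$ bound.
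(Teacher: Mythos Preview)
Your proposal is correct and follows essentially the same route as the paper: Bochner's identity combined with the operator-norm hypothesis~\eqref{eq:condition_Hessian_LSI}, then the Donsker--Varadhan entropy inequality together with the log-Sobolev inequality and the Kato bound $|\nabla|\nabla_q u||^2 \le |\nabla_q^2 u|^2$, followed by the union bound $\rme^{\lambda|\nabla V|_\infty} \le \sum_i \rme^{\lambda|\partial_i V|}$ to extract the~$\log d$. The only cosmetic difference is that the paper introduces an auxiliary parameter~$\varepsilon$ (setting $c = \varepsilon^{-1} c_3 C_{\rm LSI}$ and then~$\varepsilon = 1/2$) while you pick $\lambda = 2 c_3 C_{\rm LSI}$ directly; your discussion of the regularization $|\nabla_q u|_\varepsilon$ needed for Kato is in fact more explicit than the paper's one-line appeal to $|\nabla|\nabla f||^2 \le |\nabla^2 f|^2$.
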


Note that for this proposition to be useful, we implicitly assume the exponential integrability of the partial derivatives of $V$ with respect to $\nu$. If we look at nice enough separable potentials, the dependence of $C'$ on the dimension is of order $\log d$, since the other quantities involved in the bound are dimension-free. Note that a logarithmic scaling in $d$ is typically correct for maximas of~$d$ independent quantities \cite[Lemma 2.2]{dev2001} (which here would correspond to the case of a separable potential). Note also that the assumption~\eqref{eq:condition_Hessian_LSI}, while natural for separable potentials, is not invariant under a change of orthonormal basis.

\begin{proof}
The proof follows the same general approach as that of Proposition~\ref{prop:estimate_cLa^2_invBB*}, but with the extra assumption on $V$, \eqref{eq:Bochnereq_CS} can be replaced by 
\begin{equation}
    \left\|\nabla_q^2 u\right\|_{L^2(\nu)}^2 \leq \left\|\nabla_q^{*}\nabla_q u \right\|_{L^2(\nu)}^2+ c_{3}\|\nabla_q u\|_{L^2(\nu)}^2 + c_{3}\int{|\nabla_q u|^2|\nabla V|_{\infty}d\nu}.
\end{equation}
We cannot use Lemma~\ref{lem:VillaniA24} to control the last term in the above equation, but use instead the following classical entropy inequality: for $g$ bounded measurable, 
\[
\int_\cD f^2g \, d\nu \leq \operatorname{Ent}_{\nu}(f^2) + \|f\|_{L^2(\nu)}^2\log \left( \int_\cD \rme^{g} \, d\nu \right).
\]
This inequality is an immediate consequence of the fact that the entropy is the Legendre transform of the log-Laplace functional, with respect to the duality between functions and measures, \emph{i.e.}
\[
\operatorname{Ent}_{\nu}(f^2) = \sup \left\{ \int_\cD f^2 g \, d\nu - \|f\|_{L^2(\nu)}^2\log \left(\int_\cD \rme^{g} \, d\nu\right), \ \ g \textrm{ bounded measurable} \right\}.
\]
As a consequence, we obtain a bound of the form 
\[
\int_\cD |\nabla_q u|^2|\nabla V|_{\infty} \, d\nu \leq \frac1c \operatorname{Ent}_{\nu}(|\nabla_q u|^2) + \frac1c \|\nabla_q u\|_{L^2(\nu)}^2 \log \left(\int_\cD \rme^{c |\nabla V|_{\infty}} \, d\nu\right),
\]
with $c>0$ a constant to be adjusted later. When $\nu$ satisfies a logarithmic Sobolev inequality, the first factor on the right-hand side can be controlled by $C_{\rm LSI}\|\nabla^2_q u\|^2_{L^2(\nu)}$, using the inequality $\left|\nabla |\nabla f|\right|^2 \leq |\nabla^2 f|^2$. The second factor can be bounded as
\[
\log \left( \int_\cD \rme^{c|\nabla V|_{\infty}} \, d\nu\right) \leq \log \left( \sum_{i=1}^d \int_\cD \rme^{c|\partial_{q_i} V|_{\infty}} \, d\nu\right) \leq \log d + \log\left( \max_{i=1,\dots,d} \int_\cD \rme^{c|\partial_{q_i} V|} \, d\nu \right).
\]
We therefore obtain the inequality
\begin{equation}
\begin{aligned}
    \left\|\nabla^2 u\right\|_{L^2(\nu)}^2 & \leq \left\|\nabla_q^{*}\nabla_q u \right\|_{L^2(\nu)}^2+ c_{3}\|\nabla_q u\|_{L^2(\nu)}^2 \\ 
    & \ \ \ + \frac{c_{3}C_{\rm LSI}}{c} \left\|\nabla^2 u\right\|_{L^2(\nu)}^2 + \frac1c \|\nabla_q u\|_{L^2(\nu)}^2 \left[\log d + \log\left( \max_{i=1,\dots,d} \int_\cD \rme^{c|\partial_i V|} \, d\nu \right)\right].
    \end{aligned}
\end{equation}
Taking $c = \varepsilon^{-1}c_3 C_{\rm LSI}$ leads to 
\[
\left\|\nabla^2 u\right\|_{L^2(\nu)}^2 \leq \frac{1}{1-\varepsilon}\left\|\nabla_q^{*}\nabla_q u \right\|_{L^2(\nu)}^2 + L'_{d, \varepsilon}\|\nabla_q u\|_{L^2(\nu)}^2,
\]
with
\[
L'_{d, \varepsilon} := \frac{1}{1-\varepsilon}\left(c_3 + \frac{\varepsilon}{c_3 C_{\rm LSI}}\left[\log d + \log\left( \max_{i=1,\dots,d} \int_\cD \rme^{\varepsilon^{-1}c_3 C_{\rm LSI}|\partial_i V|} \, d\nu \right)\right]\right).
\]
We next set $\eps = 1/2$ and follow the same reasoning as in the end of the proof of Proposition~\ref{prop:estimate_cLa^2_invBB*} to conclude.
\end{proof}

\subsection{Linear Boltzmann equation}
\label{sec:PDMP}

We turn in this section to another paradigmatic hypocoercive dynamics, known as the linear Boltzmann equation in the community of researchers working on kinetic equations, while it corresponds to the randomized Hybrid Monte Carlo method in computational statistics and molecular simulation.

We consider the linear Boltzmann equation~\eqref{eq:Boltzmann_eq} with a more general kinetic energy~$U(v)$ than~$|v|^2/2$, for the choice
\[
k(y,v,w) = \frac{\gamma}{Z_\kappa} \rme^{-\beta U(v)}.
\]
This corresponds to the following dynamics 
\[
\frac{\partial f}{\partial t}(t,y,v) - \nabla V(y) \cdot \nabla_v f(t,y,v) + \nabla U(v) \cdot\nabla_y f(t,y,v) = \gamma \left[ \frac{\rme^{-\beta U(v)}}{Z_\kappa} \int_{\R^d} f(t,y,w) \, dw - f(t,y,v)\right].
\]
We next change the unknown function from~$f(t,y,v)$ to~$\varphi(t,y,v) = f(t,y,v) / f_\infty(y,v)$, with~$f_\infty$ the steady-state of the above equation. In fact, $f_\infty$ is the density of~\eqref{eq:canonical_U_V}, namely $f_\infty(y,v) = Z^{-1} \rme^{-\beta (V(y)+U(v))}$. We then obtain the following dynamics on~$\varphi$:
\[
\partial_t \varphi + \cLham \varphi = \gamma(\Pi_0-1)\varphi,
\]
with~$\Pi_0$ defined in~\eqref{eq:def_Pi}. This shows that the operator to consider for the longtime convergence of the linear Boltzmann equation is $\cL = \cLa + \cLs$ with $\cLa = -\cLham$ and
\begin{equation}
  \label{eq:LFD_PDMP}
  \cLs = \gamma(\Pi_0-1).
\end{equation}

\begin{remark}
  We can alternatively interpret the generator as the Fokker--Planck operator associated with a piecewise deterministic Markov process where Hamiltonian trajectories are interrupted at random exponential times by a resampling of the momenta according to~$\kappa$. This corresponds to the so-called Randomized Hybrid Monte Carlo method~\cite{BRSS17}. 
\end{remark}

The estimates on the resolvent of the generator obtained by the approach described in Section~\ref{sec:abstract} are the following.

\begin{prop}
  \label{prop:RHMC}
  Suppose that Assumption~\ref{ass:Poincare_nu} holds true and that $U$ is smooth and even, with $\rme^{-\beta U} \in L^1(\R^d)$. Then, the resolvent of the generator $\cL = \cLa + \cLs$ with $\cLa$ given by the opposite of the generator in~\eqref{eq:Lham} and $\cLs$ given by~\eqref{eq:LFD_PDMP} satisfies the following bound:
  \[
    \begin{aligned}
      \|\cL^{-1}\| & \leq \frac{2\beta\gamma}{\lambda_{\rm min}(\mathcal{M}) K_\nu^2} + \frac{2}{\gamma} \left( \frac32 + \left\| \Pi_\subplus \cLham^2 \Pi_0 \left(\cLa_{\subplus 0}^*\cLa_{\subplus 0}\right)^{-1} \right\|^2 \right).
    \end{aligned}
  \]
\end{prop}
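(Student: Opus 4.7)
The plan is to apply Theorem~\ref{thm:bounds_Linv} after verifying Assumptions~\ref{ass:structural_Pi}--\ref{ass:technical_ass} for this new operator, re-using as much of the Langevin analysis of Section~\ref{sec:application_Langevin} as possible, since only the symmetric part~$\cLs$ differs.

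First I would check the structural assumptions. Assumption~\ref{ass:structural_Pi} holds with~$\Pi_0$ defined by~\eqref{eq:def_Pi}: the identity $\Pi_0 \cLham \Pi_0 = 0$ follows as in the Langevin case from $U$ being even, and $\cLs \Pi_0 = \gamma(\Pi_0 - 1)\Pi_0 = 0 = \Pi_0\cLs$ is immediate. Assumption~\ref{ass:reversibility} is satisfied with the momentum-reversal involution $\cR f(q,p) = f(q,-p)$ (with $\|\cR_{22}\|=1$), exactly as in the Langevin case. Since $-\cLs = \gamma \Pi_\subplus$, Assumption~\ref{ass:A_inv} holds with $s = \gamma$. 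Moreover, $\cLa_{\subplus 0}^{*}\cLa_{\subplus 0}$ coincides with its Langevin counterpart~\eqref{eq:A^*A_Langevin} (the antisymmetric part is identical up to a sign), so Assumption~\ref{ass:macro_coercivity} holds with $a = K_\nu\sqrt{\lambda_{\min}(\mathcal{M})/\beta}$ by the same computation as in Section~\ref{eq:general_estimates_Langevin}.

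Next I would check Assumption~\ref{ass:technical_ass}. For $\cLs_{11}$, since $\cLs = -\gamma\Pi_\subplus$ and $\cH_1 \subset \cH_\subplus$, one has $\cLs_{11} = -\gamma \Pi_1$, hence $\|\cLs_{11}\| = \gamma$. For the second operator, expanding $\cL_{21}\cLa_{10} = \Pi_2(\cLs+\cLa)\Pi_1 \cLa \Pi_0$ and using that $\Pi_2 \cLs \Pi_1 = -\gamma \Pi_2 \Pi_1 = 0$, together with $\Pi_1 \cLa \Pi_0 = \cLa\Pi_0$ (since $\mathrm{Ran}(\cLa_{\subplus 0}) = \cH_1$), gives $\cL_{21}\cLa_{10} = \Pi_2 \cLa^2 \Pi_0$. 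Because $\|\Pi_2 u\| \leq \|\Pi_\subplus u\|$ and $\cLa = -\cLham$, I obtain the key bound
\[
  \left\|\cL_{21}\cLa_{10}\bigl(\cLa_{\subplus 0}^*\cLa_{\subplus 0}\bigr)^{-1}\right\| \leq \left\|\Pi_\subplus \cLham^2 \Pi_0 \bigl(\cLa_{\subplus 0}^*\cLa_{\subplus 0}\bigr)^{-1}\right\|,
\]
and the boundedness of the right-hand side has already been established in the proof of the general Langevin estimate.

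It then remains to substitute the values $\|\cLs_{11}\| = \gamma$, $a^2 = K_\nu^2\lambda_{\min}(\mathcal{M})/\beta$, $\|\cR_{22}\|=1$ and $s=\gamma$ into the conclusion of Theorem~\ref{thm:bounds_Linv}, which directly yields
\[
  \|\cL^{-1}\| \leq \frac{2\beta\gamma}{\lambda_{\min}(\mathcal{M})K_\nu^2} + \frac{2}{\gamma}\left\|\Pi_\subplus \cLham^2 \Pi_0\bigl(\cLa_{\subplus 0}^*\cLa_{\subplus 0}\bigr)^{-1}\right\|^2 + \frac{3}{\gamma},
\]
which is the claimed estimate after regrouping the constant $3/\gamma$ with the $2/\gamma$-prefactor. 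No step presents a real obstacle; the only subtle point is recognising that the simpler algebraic form of $\cLs = -\gamma\Pi_\subplus$ both eliminates the $\Pi_2 \cLs\cLa \Pi_0$ contribution that appears in the Langevin case and trivialises the bound on $\cLs_{11}$, which explains why the final estimate is somewhat cleaner (and in particular involves no mass or $K_\kappa$ factor) than~\eqref{eq:bounds_invL_Langevin}.
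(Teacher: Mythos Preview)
Your proposal is correct and follows essentially the same route as the paper: verify Assumptions~\ref{ass:structural_Pi}--\ref{ass:technical_ass} by reusing the Langevin computations for the antisymmetric part, observe that $s=\gamma$, $\|\cLs_{11}\|=\gamma$ and $\cLs_{21}=0$ for the new symmetric part $\cLs=\gamma(\Pi_0-1)$, and then plug into Theorem~\ref{thm:bounds_Linv}. Your write-up is in fact more explicit than the paper's (you spell out why $\cL_{21}\cLa_{10}=\Pi_2\cLa^2\Pi_0$ and then bound it by the $\Pi_\subplus$ version), but the argument is the same.
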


Note that, here as well, the upper bound on the resolvent of the generator scales as $\dps \min(\gamma,\gamma^{-1})$. The scaling with respect to the dimension~$d$ can be made precise as in Sections~\ref{sec:scaling_dimension_Langevin} and~\ref{sec:scaling_dimension_Langevin_improved}.

\begin{proof}
  All the computations of Section~\ref{eq:general_estimates_Langevin} are valid upon taking $s = \gamma$. The only changes that need to be made are in the verification of Assumption~\ref{ass:technical_ass}. Note first that $\Pi_1 \cLs \Pi_1 = \gamma \Pi_1 \Pi_\subplus \Pi_1 = \gamma \Pi_1$, which immediately implies that $\|\Pi_1 \cLs \Pi_1\| = \gamma$. Next, $\cLs_{21} = 0$, so that $\cLs_{21} \cLa_{10} \left(\cLa_{\subplus 0}^*\cLa_{\subplus 0}\right)^{-1} = 0$. The final estimate then directly follows from Theorem~\ref{thm:bounds_Linv}. 
\end{proof}


\subsection{Adaptive Langevin dynamics}
\label{sec:AdL}

We show here how to apply the framework of Section~\ref{sec:abstract} to Adaptive Langevin dynamics, which is a Langevin dynamics in which the friction is a dynamical variable~$\xi \in \mathbb{R}$ following some Nos\'e--Hoover feedback dynamics. Therefore, $x = (q,p,\xi)$ and $\cX = \cD \times \mathbb{R}^d \times \mathbb{R}$. We consider for simplicity the case when
\begin{equation}
  \label{eq:quadratic_U}
  U(p) = \frac{|p|^2}{2} = \frac12 \sum_{i=1}^d p_i^2.
\end{equation}
After some suitable normalization (see~\cite{LSS19}), the invariant measure of the dynamics reads
\[
\mu(dq \, dp \, d\xi) = Z^{-1} \exp\left(-\beta \left [\frac{|p|^2}{2} + V(q) + \frac{\xi^2}{2} \right]\right) \, dq \, dp \, d\xi,
\]
and the generator is, for some $\varepsilon > 0$,
\newcommand{\cLNH}{\cL_{\rm NH}}
\[
  \cL = \cLham + \gamma \cLFD + \frac{1}{\varepsilon} \cLNH,
\]
with $\cLham$ and $\cLFD$ defined respectively in~\eqref{eq:Lham} and~\eqref{eq:LFD_Langevin}, and 
\[
  \cLNH = \left (|p|^{2} - \frac{d}{\beta} \right ) \partial_{\xi} - \xi\, p^T \nabla_{p} = \frac{1}{\beta^2} \left((\partial_\xi-\partial_\xi^*)\nabla_p^*\nabla_p + \Delta_p^*\partial_\xi - \Delta_p\partial_\xi^* \right).
\]
Therefore, $\cLs = \gamma \cLFD$ as for standard Langevin dynamics, while $\cLa = \cLham + \varepsilon^{-1} \cLNH$. The framework introduced in Section~\ref{sec:abstract} allows to retrieve the scaling on the resolvent of the generator found in~\cite{LSS19} using the techniques from~\cite{Herau06,DMS15}.

\begin{prop}
  Suppose that Assumption~\ref{ass:Poincare_nu} holds true. Then there exists $C \in \mathbb{R}_+$ such that
  \[
    \forall \gamma,\varepsilon>0, \qquad \|\cL^{-1}\| \leq C\max\left(\gamma\varepsilon^2,\gamma,\frac1\gamma,\frac{1}{\gamma\varepsilon^2}\right).
\]
\end{prop}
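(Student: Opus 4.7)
The plan is to apply the Schur complement framework of Theorem 1 to $\cL = \cLham + \gamma\cLFD + \varepsilon^{-1}\cLNH$, with an adaptation to accommodate the presence of $\cLNH$. Take $\Pi_0$ to be the conditional expectation with respect to $p$ against $\kappa$, so that $\cH_0$ consists of functions of $(q,\xi)$ only (matching $\ker\cLs$). The structural condition $\Pi_0\cLa\Pi_0 = 0$ follows by $p$-parity, since the integrals of $p$ (from $\cLham$) and of $|p|^2 - d/\beta$ (from $\cLNH$) against $\kappa$ vanish, and Assumption~\ref{ass:A_inv} holds with $s=\gamma$ by the Gaussian Poincar\'e inequality on $\kappa$.

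A direct computation, the cross-term vanishing by $p$-parity, yields
\begin{equation*}
\cLa_{\subplus 0}^{*}\cLa_{\subplus 0} = \frac{1}{\beta}\,\nabla_q^{*}\nabla_q + \frac{2d}{\beta^2\varepsilon^2}\,\partial_\xi^{*}\partial_\xi
\end{equation*}
on $\cH_0$. Combining the Poincar\'e inequalities for $\nu$ (constant $K_\nu^2$) and for the Gaussian marginal in $\xi$ (constant $\beta$) yields macroscopic coercivity with $a^2 = \min(K_\nu^2/\beta,\, 2d/(\beta\varepsilon^2))$, hence $1/a^2 \leq C\max(1,\varepsilon^2)$. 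The operator $\cLFD$ preserves the Hermite-degree decomposition of $\cH$, so $\Pi_i\cLFD\Pi_j = 0$ whenever $i\neq j$; in particular $\cLFD$ acts as $-1$ on the $p\cdot h$-sector and as $-2$ on the $(|p|^2-d/\beta)k$-sector of $\cH_1$, so $\|\cLs_{11}\| \leq 2\gamma$, and $\cL_{21}\cLa_{10}(\cLa_{\subplus 0}^{*}\cLa_{\subplus 0})^{-1} = \Pi_2\,\cLa^{2}\,\Pi_0\,\Lambda^{-1}$ with $\Lambda = \cLa_{\subplus 0}^{*}\cLa_{\subplus 0}$. Expanding $\cLa^{2} = \cLham^{2} + \varepsilon^{-1}(\cLham\cLNH + \cLNH\cLham) + \varepsilon^{-2}\cLNH^{2}$ and analysing each term in the joint spectral basis of $\nabla_q^{*}\nabla_q$ and $\partial_\xi^{*}\partial_\xi$, the negative powers of $\varepsilon$ are partly compensated by the $\varepsilon^{2}$-weight of $\Lambda^{-1}$ on the $\xi$-sector; tracking the two regimes $\varepsilon \lessgtr 1$ yields $\|\cL_{21}\cLa_{10}\Lambda^{-1}\|^{2} \leq C\max(1,\,1/\varepsilon^{2})$.

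The main technical obstacle is that Assumption~\ref{ass:reversibility} does not hold in its standard form, since $\cLNH$ is even under momentum reversal. One works instead with the joint reversal $\cR:(q,p,\xi)\mapsto(q,-p,-\xi)$, which satisfies $\cR^{2}=1$, $\cR\cLs\cR=\cLs$ and $\cR\cLa\cR=-\cLa$, but acts as $\xi$-reversal on $\cH_0$. The proof of Theorem~\ref{thm:bounds_Linv} still adapts under this weaker notion: the key identity $(\cL_{12})^{*} = -\cL_{21}$ used to bound $\Schur^{-1}$ persists because $\Pi_2\cLFD\Pi_1 = 0 = \Pi_1\cLFD\Pi_2$, while the antisymmetric correction $\cLa_{11}=\Pi_1\cLa\Pi_1$ acquired by $\cL_{11}$ (absent under full Assumption~\ref{ass:reversibility}) is controlled along similar lines and absorbed into the same scaling. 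Assembling the contributions $\|\cLs_{11}\|/a^{2} \leq C\gamma\max(1,\varepsilon^{2})$, $\|\cL_{21}\cLa_{10}\Lambda^{-1}\|^{2}/s \leq C\max(1/\gamma,\,1/(\gamma\varepsilon^{2}))$ and $3/s \leq C/\gamma$, Theorem~\ref{thm:bounds_Linv} yields $\|\cL^{-1}\| \leq C\max(\gamma\varepsilon^{2},\gamma,1/\gamma,1/(\gamma\varepsilon^{2}))$, which is the claimed bound.
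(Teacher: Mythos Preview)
Your argument follows the paper's own route almost verbatim: same projector~$\Pi_0$, same computation of $\cLa_{\subplus 0}^{*}\cLa_{\subplus 0}$ and~$a^2$, same reduction of the $\cL_{21}\cLa_{10}\Lambda^{-1}$ term to $\Pi_\subplus\cLa^2\Pi_0\Lambda^{-1}$, and the same final assembly via Theorem~\ref{thm:bounds_Linv}. You even improve on the paper by noticing that Assumption~\ref{ass:reversibility} genuinely fails here (momentum reversal leaves $\cLNH$ invariant, while the joint $(p,\xi)$-reversal does not fix $\Pi_0$), a point the paper's proof passes over with the phrase ``as for Langevin dynamics''.

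There is, however, one concrete gap. Your claim that ``$\cLFD$ preserves the Hermite--degree decomposition of $\cH$, so $\Pi_i\cLFD\Pi_j=0$ whenever $i\neq j$'' is based on a false identification. It is true that $\cLFD$ is diagonal in the Hermite grading in~$p$, but the decomposition $\cH_0\oplus\cH_1\oplus\cH_2$ is \emph{not} the Hermite grading: an element of $\cH_1$ has the form $p^{T}\nabla_q\varphi+\varepsilon^{-1}(|p|^2-d/\beta)\partial_\xi\varphi$, mixing Hermite degrees~$1$ and~$2$ with coefficients tied together through a single~$\varphi$. Since $\cLFD$ acts with eigenvalue~$-1$ on the first piece and~$-2$ on the second, the image $-p^{T}\nabla_q\varphi-2\varepsilon^{-1}(|p|^2-d/\beta)\partial_\xi\varphi$ is generally \emph{not} of the form $\cLa_{\subplus 0}\psi$ (that would force $\partial_\xi\varphi$ to be independent of~$q$). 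Hence $\Pi_2\cLFD\Pi_1\neq 0$, and your deduction $(\cL_{12})^{*}=-\cL_{21}$ (which needs $\cLs_{21}=0$) does not follow. The paper's proof states the same vanishing, so the issue is shared; but your justification for it is explicitly the step that fails.

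The gap is repairable within the same framework: the extra contribution $\gamma\,\Pi_2\cLFD\cLa\Pi_0\Lambda^{-1}$ equals $-\gamma\varepsilon^{-1}\Pi_2\bigl[(|p|^2-d/\beta)\partial_\xi\Lambda^{-1}\bigr]$, and since $\|\partial_\xi\Lambda^{-1}\|\leq C\varepsilon^{2}$ (the $\partial_\xi^{*}\partial_\xi$ block of $\Lambda$ carries weight~$\varepsilon^{-2}$) one gets a bound $O(\gamma\varepsilon)$, whose square over~$s=\gamma$ contributes $O(\gamma\varepsilon^{2})$---already present in the target~$\max$. Likewise the antisymmetric piece $\cLa_{11}$ you flag can be handled directly by bounding $\Lambda^{-1}\cLa_{\subplus 0}^{*}\cLa\cLa_{\subplus 0}\Lambda^{-1}$ (this is essentially the same operator as $\Pi_\subplus\cLa^2\Pi_0\Lambda^{-1}$ after one more factor of $\cLa_{\subplus 0}\Lambda^{-1}$), rather than being ``absorbed'' by a symmetry argument. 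But as written, both your proof and the paper's rely on a vanishing that does not hold.
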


\begin{proof}
Assumptions~\ref{ass:structural_Pi}, \ref{ass:A_inv} and~\ref{ass:reversibility} hold true as for Langevin dynamics. A simple computation next shows that
\[
\cLa_{\subplus 0}^*\cLa_{\subplus 0} = \left (\frac{2d}{\beta^2 \varepsilon^2} \partial_{\xi}^{*}\partial_{\xi} + \frac1\beta \nabla_{q}^{*}\nabla_{q} \right ) \Pi_0,
\]
which, by tensorization of Poincar\'e inequalities, implies that Assumption~\ref{ass:macro_coercivity} holds with
\[
  a^2 = \frac1\beta \min\left(\frac{2d}{\varepsilon^2},K_\nu^2\right).
\]
Let us next turn to Assumption~\ref{ass:technical_ass}. Computations similar to the ones performed for Langevin dynamics (see the proof of Corollary~\ref{cor:Lang}) show that $\Pi_1 \cLFD \Pi_1$ is bounded and $\cLs_{21} \cLa_{12} \Pi_0  \left(B^* B\right)^{-1} = 0$ since $\cLs_{21} = \gamma \Pi_2 \cLFD \Pi_1 = 0$ for the choice~\eqref{eq:quadratic_U}. From the computations in the proof of~\cite[Lemma~2.7]{LSS19}, there exists $R \in \R_+$ such that 
\[
\left\| \Pi_\subplus\cLham^2 \Pi_0  \left(\cLa_{\subplus 0}^*\cLa_{\subplus 0}\right)^{-1} \right\| \leq R \max\left(1,\frac1\varepsilon\right).
\]
An application of Theorem~\ref{thm:bounds_Linv} then allows to conclude.
\end{proof}
  
\appendix
\section{Generalized Poincar\'e-type inequalities}
\label{sec:gen_Poincare}

We show here how to obtain Poincar\'e-type inequalities, as recently derived in~\cite{AM19} and revisited in~\cite{CLW19}, relying on the algebraic framework presented in Section~\ref{sec:abstract}. We assume in all this section that $\cLa \mathbf{1} = 0$ and that $\cLs$ is a negative operator. Let us emphasize that some arguments are formal. The main interest of our presentation in our opinion is to make explicit the algebraic framework behind the estimates obtained in~\cite{AM19,CLW19} for Langevin dynamics, in order to extend the approach to other hypocoercive dynamics (as done in~\cite{LW20} for various models of piecewise deterministic Markov processes, as the one considered in Section~\ref{sec:PDMP}). In particular, the framework used here allows to simplify some algebraic manipulations, as for instance in the proof of~\cite[Theorem~2]{CLW19}.

We first show how to obtain Poincar\'e-type inequalities using the antisymmetric part of the generator, in a static setting where only spatial degrees of freedom are considered. We then give a somewhat abstract account of Poincar\'e-type inequalities in a space-time setting and recall how they are used to prove the exponential decay of the evolution semigroup. We do not provide new results in the space-time setting, but merely present the results of~\cite{AM19,CLW19} in the language of this work.

\paragraph{Poincar\'e-type inequality using the antisymmetric part of the generator.}
We start by proving the following Poincar\'e-type estimate. Recall that $\Pi_\subplus = 1-\Pi_0$. We consider, in the result below, a slight modification of Assumption~\ref{ass:technical_ass}. 

\begin{prop}
\label{prop:Poincare_static}
  Suppose that Assumptions~\ref{ass:structural_Pi} and~\ref{ass:macro_coercivity} hold true, and that the operators~$(1-\cLs)^{1/2} \Pi_{1}$ and~$\Pi_\subplus \cLa^2 \Pi_0 \left(\cLa_{\subplus 0}^*\cLa_{\subplus 0}\right)^{-1}$ are bounded. Then, 
  \begin{equation}
    \label{eq:poincare_abstract}
    \forall f \in C_\mathrm{c}^\infty(\cX), \qquad \left\| f - \langle f,\mathbf{1}\rangle \right\| \leq C_1 \| (1-\Pi_0)f \| + C_2 \| (1-\cLs)^{-1/2} \cLa f \|,
  \end{equation}
  with 
  \begin{equation}
    \label{eq:expression_C1_C2}
    C_1 = 1+\left\|\Pi_\subplus \cLa^2 \Pi_0 \left(\cLa_{\subplus 0}^*\cLa_{\subplus 0}\right)^{-1}\right\| < +\infty,
    \qquad
    C_2 = \left\| (1-\cLs_{\subplus\subplus})^{1/2} \cLa_{\subplus 0}\left(\cLa_{\subplus 0}^*\cLa_{\subplus 0}\right)^{-1} \right\| < +\infty.
  \end{equation}
\end{prop}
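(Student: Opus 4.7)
The strategy is to reduce the inequality to an estimate on the $\cH_0$-component after centering. Let $\tilde f = f - \langle f,\mathbf{1}\rangle\mathbf{1}\in\cH$. Since $\cLa \mathbf{1} = 0$ we have $\cLa\tilde f = \cLa f$, and since $\Pi_\subplus\mathbf{1} = 0$ (viewing $\Pi_0$ as extended to $L^2(\mu)$ with $\Pi_0\mathbf{1}=\mathbf{1}$) we also have $\Pi_\subplus\tilde f = (1-\Pi_0)f$. The triangle inequality $\|\tilde f\|\leq \|\Pi_0\tilde f\| + \|\Pi_\subplus\tilde f\|$ reduces the statement to proving
\[
\|\Pi_0\tilde f\| \leq (C_1-1)\|\Pi_\subplus \tilde f\| + C_2\|(1-\cLs)^{-1/2}\cLa f\|.
\]

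By Assumption~\ref{ass:macro_coercivity} the operator $\cLa_{\subplus 0}^*\cLa_{\subplus 0}$ is invertible on $\cH_0$, so $\Pi_0\tilde f = (\cLa_{\subplus 0}^*\cLa_{\subplus 0})^{-1}\cLa_{\subplus 0}^*\cLa_{\subplus 0}\Pi_0\tilde f$. Setting $W := \cLa_{\subplus 0}(\cLa_{\subplus 0}^*\cLa_{\subplus 0})^{-1}\Pi_0\tilde f\in\cH_1$, I would rewrite
\[
\|\Pi_0\tilde f\|^2 = \langle W,\,\cLa_{\subplus 0}\Pi_0\tilde f\rangle,
\]
and use $\Pi_0\cLa\Pi_0 = 0$ (Assumption~\ref{ass:structural_Pi}) to replace $\cLa_{\subplus 0}\Pi_0\tilde f = \cLa\Pi_0\tilde f = \cLa\tilde f - \cLa\Pi_\subplus\tilde f$, splitting the right-hand side into two inner products.

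For the first piece, $\langle W,\cLa\tilde f\rangle = \langle W,\Pi_\subplus\cLa\tilde f\rangle$ since $W\in\cH_\subplus$; inserting $(1-\cLs_{\subplus\subplus})^{\pm 1/2}$ and applying Cauchy--Schwarz, while using the block identification $\Pi_\subplus(1-\cLs)^{-1/2} = (1-\cLs_{\subplus\subplus})^{-1/2}\Pi_\subplus$ (a consequence of $\cLs\Pi_0 = 0$), yields the bound $C_2\|\Pi_0\tilde f\|\,\|(1-\cLs)^{-1/2}\cLa f\|$. For the second piece, integration by parts via $\cLa^* = -\cLa$ gives
\[
\langle W,\cLa\Pi_\subplus\tilde f\rangle = -\langle \Pi_\subplus\cLa W,\,\Pi_\subplus\tilde f\rangle,
\]
and the identity $\cLa\cLa_{\subplus 0} = \cLa\Pi_\subplus\cLa\Pi_0 = \cLa^2\Pi_0$ (again from $\Pi_0\cLa\Pi_0 = 0$) simplifies $\Pi_\subplus\cLa W = \Pi_\subplus\cLa^2\Pi_0(\cLa_{\subplus 0}^*\cLa_{\subplus 0})^{-1}\Pi_0\tilde f$, yielding the bound $(C_1-1)\|\Pi_0\tilde f\|\,\|\Pi_\subplus\tilde f\|$. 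Combining both bounds and dividing by $\|\Pi_0\tilde f\|$ gives the reduced inequality, and the proof concludes. The finiteness of $C_2$ comes from the factorization $\cLa_{\subplus 0}(\cLa_{\subplus 0}^*\cLa_{\subplus 0})^{-1} = \cLa_{10}^{-*}$, which has norm at most $1/a$ by Assumption~\ref{ass:macro_coercivity}, together with the assumed boundedness of $(1-\cLs)^{1/2}\Pi_1 = (1-\cLs_{\subplus\subplus})^{1/2}\Pi_1$; finiteness of $C_1$ is part of the hypotheses.

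The main obstacle is essentially bookkeeping: one must correctly track that $(1-\cLs)^{\pm 1/2}$ acts as the identity on $\cH_0$ and as $(1-\cLs_{\subplus\subplus})^{\pm 1/2}$ on $\cH_\subplus$ in order to pass freely between the two in the Cauchy--Schwarz step, verify the identification of $\cLa_{\subplus 0}(\cLa_{\subplus 0}^*\cLa_{\subplus 0})^{-1}$ with the adjoint inverse of $\cLa_{10}:\cH_0\to\cH_1$ to control $C_2$, and handle the centering step so that the right-hand side is expressed in terms of $f$ rather than $\tilde f$.
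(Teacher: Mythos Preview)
Your proposal is correct and follows essentially the same approach as the paper's proof: reduce by the triangle inequality to bounding $\|\Pi_0\tilde f\|$, write $\|\Pi_0\tilde f\|^2 = \langle W,\cLa_{\subplus 0}\Pi_0\tilde f\rangle$ with $W=\cLa_{\subplus 0}(\cLa_{\subplus 0}^*\cLa_{\subplus 0})^{-1}\Pi_0\tilde f$, split $\cLa\Pi_0\tilde f=\cLa\tilde f-\cLa\Pi_\subplus\tilde f$, and handle the two pieces by a weighted Cauchy--Schwarz with $(1-\cLs)^{\pm 1/2}$ and by antisymmetry of~$\cLa$, respectively. Your explicit bookkeeping of the block identification $(1-\cLs)^{1/2}\Pi_\subplus=(1-\cLs_{\subplus\subplus})^{1/2}\Pi_\subplus$ and the centering step is a helpful clarification, but the argument is otherwise identical.
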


Note that, for Langevin dynamics, $\left\| (1-\cLs)^{-1/2} \cdot \right\|$ is a norm equivalent to the canonical norm on $L^2(\nu,H^{-1}(\kappa))$ (since $\cLs = \cLFD$ with $\cLFD$ given by~\eqref{eq:LFD_Langevin}), so that~\eqref{eq:poincare_abstract} corresponds to the result in~\cite[Theorem~1.2]{AM19}. The assumption on~$(1-\cLs)^{1/2} \Pi_{1}$ is automatically satisfied for operators associated with linear Boltzmann dynamics, or Langevin dynamics with quadratic kinetic energies (since~$\cLs \Pi_1$ is proportional to~$\Pi_1$ in these cases and hence bounded, see the proof of Corollary~\ref{cor:Lang}).

\begin{proof}
To prove~\eqref{eq:poincare_abstract}, we first note that, since $\| f \| \leq \| (1-\Pi_0)f \| + \| \Pi_0 f \|$, it is sufficient to establish that there exists $\widetilde{C}_1 \in \mathbb{R}_+$ such that, for any $f \in C_\mathrm{c}^\infty(\cX)$,
\begin{equation}
  \label{eq:poincare_abstract_Pi}
  \left\| \Pi_0 f - \langle f,\mathbf{1}\rangle \right\| \leq \widetilde{C}_1 \| (1-\Pi_0)f \| + C_2 \| (1-\cLs)^{-1/2} \cLa f \|.
\end{equation}
The inequality~\eqref{eq:poincare_abstract} then holds with $C_1 = 1 + \widetilde{C}_1$. To obtain~\eqref{eq:poincare_abstract_Pi}, we assume without loss of generality that $\langle f,\mathbf{1}\rangle_{L^2(\mu)} = 0$ and compute
\[
  \begin{aligned}
    \| \Pi_0 f \|^2 & = \langle \cLa_{\subplus 0} f , \cLa_{\subplus 0} \left(\cLa_{\subplus 0}^* \cLa_{\subplus 0}\right)^{-1} \Pi_0 f \rangle  \\
    & = \langle \cLa \Pi_0 f , \cLa_{\subplus 0} \left(\cLa_{\subplus 0}^* \cLa_{\subplus 0}\right)^{-1} \Pi_0 f \rangle  \\
    & = \langle \cLa f , \cLa_{\subplus 0} \left(\cLa_{\subplus 0}^* \cLa_{\subplus 0}\right)^{-1} \Pi_0 f \rangle - \langle \cLa (1-\Pi_0) f , \cLa_{\subplus 0} \left(\cLa_{\subplus 0}^* \cLa_{\subplus 0}\right)^{-1} \Pi_0 f \rangle \\
    & \leq \|(1-\cLs)^{-1/2}\cLa f\| \left\|(1-\cLs)^{1/2} \cLa_{\subplus 0} \left(\cLa_{\subplus 0}^* \cLa_{\subplus 0}\right)^{-1} \Pi_0 f \right\| \\
    & \ \ \ + \|(1-\Pi_0)f\| \left\| \Pi_\subplus \cLa^2 \Pi_0 \left(\cLa_{\subplus 0}^* \cLa_{\subplus 0}\right)^{-1} \Pi_0 f \right\|. 
  \end{aligned}
\]
Note that the operator $(1-\cLs)^{1/2} \cLa_{\subplus 0} \left(\cLa_{\subplus 0}^* \cLa_{\subplus 0}\right)^{-1}$ can be written as the composition of $(1-\cLs)^{1/2} \Pi_1$ (bounded by assumption) and the bounded operator $\cLa_{\subplus 0} \left(\cLa_{\subplus 0}^* \cLa_{\subplus 0}\right)^{-1}$. This leads finally to~\eqref{eq:poincare_abstract}.
\end{proof}

\paragraph{Space-time Poincar\'e inequality (formal).}
We now turn to space-time domains. We fix a time~$T>0$ and consider the reference Hilbert space 
\[
\cH_T = \left\{ \varphi \in L^2(\widetilde{\mu}_T) \, \middle| \, \left\langle \varphi,\mathbf{1}\right\rangle_{L^2(\widetilde{\mu}_T)} = 0 \right\},
\]
where
\[
\widetilde{\mu}_T(dt \, dx) = \lambda_T(dt) \mu(dx), \qquad \lambda_T(dt) = \frac1T \mathbf{1}_{[0,T]}(t) \, dt.
\]
The space-time Poincar\'e inequality proved in~\cite{AM19,CLW19} is the following: For any~$T>0$, there exist $C_{1,T},C_{2,T}\in \mathbb{R}_+$ such that, for any $f \in C_\mathrm{c}^\infty([0,T] \times \cX)$,
\[
  \left\| f-\left\langle f,\mathbf{1}\right\rangle_{L^2(\widetilde{\mu}_T)}\right\|_{L^2(\widetilde{\mu}_T)} \leq C_{1,T} \| (1-\Pi_0)f \|_{L^2(\widetilde{\mu}_T)} + C_{2,T} \| (1-\cLFD)^{-1/2}\left(-\partial_t + \cLham\right) f \|_{L^2(\widetilde{\mu}_T)},
\]
with $\cLham$ and $\cLFD$ defined in~\eqref{eq:Lham} and~\eqref{eq:LFD_Langevin} respectively. 

This suggest considering the following space-time Poincar\'e inequality for generators $\cL = \cLa + \cLs$:
\begin{equation}
  \label{eq:poincare_abstract_time}
  \left\| f-\left\langle f,\mathbf{1}\right\rangle_{L^2(\widetilde{\mu}_T)}\right\|_{L^2(\widetilde{\mu}_T)} \leq C_{1,T} \| (1-\Pi_0)f \|_{L^2(\widetilde{\mu}_T)} + C_{2,T} \| (1-\cLs)^{-1/2}\left(-\partial_t + \cLa\right) f \|_{L^2(\widetilde{\mu}_T)}.
\end{equation}
Formally, this corresponds to replacing $\cL$ in the statement of Proposition~\ref{prop:Poincare_static} by $-\partial_t + \cL$, considered on the Hilbert space~$\cH_T$. The total antisymmetric part of the operator is then $-\partial_t + \cLa$. In order to derive an inequality such as~\eqref{eq:poincare_abstract_time}, it formally suffices to replace the operator~$\cLa_{\subplus 0}$ in the derivation of~\eqref{eq:poincare_abstract_Pi} by $-\partial_t \Pi_0 + \cLa_{\subplus 0}$. Let us emphasize that this operator does not correspond to the restriction of~$-\partial_t + \cLa + \cLs$ from~$\cH_0$ to~$\cH_+$ because of the term~$-\partial_t \Pi_0$. There are however some caveats in such a derivation, related to the boundary conditions in the time domain, which is why a careful treatment is required; see~\cite[Lemma~2.6]{CLW19}.

\paragraph{Exponential convergence.} 
Let us finally show how to formally deduce an exponential convergence from~\eqref{eq:poincare_abstract_time} (the complete rigorous argument would involve regularizations and truncations in order to give a meaning to all time derivatives) when Assumption~\ref{ass:A_inv} holds true. Recall that scalar products and associated norms are by default the ones on~$L^2(\mu)$, unless explicitly mentioned otherwise.

In order to highlight the fact that it is possible to obtain lower bounds on the convergence rate with respect to some parameter in the dynamics, we consider the case when the generator is of the form
\[
  \cL = \cLa + \gamma \cLs,
  \]
  for some parameter~$\gamma > 0$.
Consider $\varphi(t) = \rme^{t \cL} \varphi_0$ for a given function $\varphi_0 \in \cH$. Note first that $\langle \varphi(t),\mathbf{1}\rangle = 0$ for all $t \geq 0$, so that $\langle \varphi,\mathbf{1}\rangle_{L^2(\widetilde{\mu}_T)} = 0$. Moreover, $t \mapsto \|\varphi(t)\|$ is non-increasing since
\[
\frac{d}{dt}\left(\frac12 \|\varphi(t)\|^2\right) = \gamma \left\langle\varphi(t),\cLs\varphi(t)\right\rangle \leq 0.
\]
Finally, from the evolution equation satisfied by~$\varphi(t)$,
\[
-\gamma \cLs \varphi(t) = (-\partial_t + \cLa) \varphi(t) = \Pi_\subplus(-\partial_t + \cLa) \varphi(t),
\]
which implies, upon applying $(-\cLs)^{-1/2}$ to both sides of the above equality (which is indeed possible in view of Assumption~\ref{ass:A_inv}), that
\[
\begin{aligned}
  -\gamma \left\langle\varphi(t),\cLs\varphi(t)\right\rangle & = \gamma \left\| (-\cLs)^{1/2} \varphi(t) \right\|^2 = \frac1\gamma \left\| (-\cLs)^{-1/2}(-\partial_t + \cLa)\varphi(t)\right\|^2\\
  & \geq \frac1\gamma \left\| (1-\cLs)^{-1/2}(-\partial_t + \cLa)\varphi(t)\right\|^2.
\end{aligned}
\]
Therefore, for any $\eta \in (0,1)$,
\[
  \begin{aligned}
    &   \|\varphi(T)\|^2 - \|\varphi(0)\|^2  = 2\gamma T \left\langle\varphi,\cLs\varphi \right\rangle_{L^2(\widetilde{\mu}_T)} \\
  & \qquad \leq - 2\gamma T(1-\eta)s \left\| \Pi_\subplus \varphi \right\|_{L^2(\widetilde{\mu}_T)}^2 - \frac{2T\eta}{\gamma} \left\| \left(1-\cLs\right)^{-1/2}(-\partial_t + \cLa) \varphi \right\|_{L^2(\widetilde{\mu}_T)}^2.
  \end{aligned}
\]
The choice $\eta = \gamma^2 s C_{2,T}^2/(\gamma^2 sC_{2,T}^2 + C_{1,T}^2)$ leads to
\[
\begin{aligned}
& \|\varphi(T)\|_{L^2(\mu)}^2 - \|\varphi(0)\|_{L^2(\mu)}^2 \\
& \leq -\frac{2\gamma sT}{\gamma^2 sC_{2,T}^2 + C_{1,T}^2} \left( C_{1,T}^2 \left\| (1-\Pi_0) \varphi(t) \right\|_{L^2(\widetilde{\mu}_T)}^2 + C_{2,T}^2\left\| \left(1-\cLs\right)^{-1/2}(-\partial_t + \cLa) \varphi(t) \right\|_{L^2(\widetilde{\mu}_T)}^2\right) \\
& \leq -\frac{\gamma sT}{\gamma^2 sC_{2,T}^2 + C_{1,T}^2} \left( C_{1,T} \left\| (1-\Pi_0) \varphi(t) \right\|_{L^2(\widetilde{\mu}_T)} + C_{2,T}\left\| \left(1-\cLs\right)^{-1/2}(-\partial_t + \cLa) \varphi(t) \right\|_{L^2(\widetilde{\mu}_T)}\right)^2 \\
& \leq -\frac{\gamma sT}{\gamma^2 sC_{2,T}^2 + C_{1,T}^2} \|\varphi\|_{L^2(\widetilde{\mu}_T)}^2 \leq -\frac{\gamma s T}{\gamma^2 sC_{2,T}^2 + C_{1,T}^2} \|\varphi(T)\|_{L^2(\mu)}^2,
\end{aligned}
\]
where we used the Poincar\'e inequality~\eqref{eq:poincare_abstract_time}, and the fact that $t \mapsto \|\varphi(t)\|_{L^2(\mu)}$ is non-increasing in the last step. By rewriting this final inequality as
\[
\|\varphi(T)\|_{L^2(\mu)}^2 \leq \alpha_T \|\varphi(0)\|_{L^2(\mu)}^2, \qquad 0 \leq \alpha_T = \left(1+\frac{\gamma sT}{\gamma^2 sC_{2,T}^2 + C_{1,T}^2}\right)^{-1} < 1,
\]
we can then conclude to an exponential convergence as in~\cite{AM19,CLW19}. It is also possible to make explicit the scaling with respect to the dimension and the friction coefficient~$\gamma$ in this setting, as done for Langevin dynamics in Section~\ref{sec:application_Langevin}. More precisely, for $T>0$ fixed, it holds
\[
\alpha_T \mathop{\sim}_{\gamma\to 0} \rme^{-\gamma s T/C_{1,T}^2}, \qquad \alpha_T \mathop{\sim}_{\gamma\to +\infty} \rme^{-T/(\gamma C_{2,T}^2)},
\]
which shows that the convergence rate of the semigroup~$\rme^{t \cL}$ is indeed of order~$\min(\gamma,\gamma^{-1})$.

\begin{remark}
  \label{rmk:false}
  If the constants~$C_{1,T},C_{2,T}$ in~\eqref{eq:poincare_abstract_time} were uniformly bounded by values $C_1,C_2>0$ with respect to~$T > 0$, it would be possible to take the limit $T \to 0$ in the above estimates. This would lead to
\[
\frac{d}{dt} \left( \|\varphi(t)\|_{L^2(\mu)}^2 \right) \leq -\frac{\gamma s}{\gamma^2 sC_{2}^2 + C_{1}^2} \|\varphi(t)\|_{L^2(\mu)}^2,
\]
and hence to an exponential convergence without prefactor, which is clearly false since this would imply the coercivity of the generator for the canonical scalar product on~$L^2(\mu)$. In~\cite{CLW19}, the constant~$C_{1,T}$ in~\eqref{eq:poincare_abstract_time} involves a term scaling as~$1/T$, which prevents taking the limit~$T \to 0$. It can be traced back to the estimates on~$\psi_2'$ in~\cite[Lemma~2.6]{CLW19}.
\end{remark}


\paragraph{Acknowledgements.}
We thank David Herzog for fruitful discussions on~\cite{AM19,CLW19}, Francis Nier for pointing out~\cite{SZ07}, and the Heilbronn Institute for Mathematical Research in Bristol, UK, where this work was first presented in a workshop on hypocoercivity (3-4 March 2020). The work of A.L and G.S. was funded in part from the European Research Council (ERC) under the European Union's Horizon 2020 research and innovation programme (grant agreement No 810367). The work of G.S. was also funded by the Agence Nationale de la Recherche, under grant ANR-19-CE40-0010-01 (QuAMProcs). The work of M.F. was funded by the ANR, under grants ANR-17-CE40-0030 (EFI) and ANR-18-CE40-006 (MESA), as well as ANR-11-LABX-0040-CIMI within the program ANR-11-IDEX-0002-02. This work was done while M.F. was a guest of the teams Mokaplan and Matherials at the INRIA Paris, whose support is gratefully acknowledged. 


\end{document}